\pdfoutput=1
\RequirePackage{ifpdf}
\ifpdf 
\documentclass[pdftex]{sigma}
\else
\documentclass{sigma}
\fi

\usepackage{amscd,eucal}

\usepackage{tikz}
\usetikzlibrary{lindenmayersystems}

\newtheorem{thm}{Theorem}[section]

\newtheorem{lem}[thm]{Lemma}

\newtheorem{prop}[thm]{Proposition}

\theoremstyle{definition}
\newtheorem{dfn}[thm]{Definition}
\newtheorem{rem}[thm]{Remark}

\numberwithin{equation}{section}

\def\ca{{\mathcal A}}
\def\cb{{\mathcal B}}
\def\cc{{\mathcal C}}
\def\cd{{\mathcal D}}

\def\cg{{\mathcal G}}
\def\ch{{\mathcal H}}
\newcommand{\cai}{\mathcal{I}}
\def\cj{{\mathcal J}}

\def\cl{{\mathcal L}}
\def\cam{{\mathcal M}}

\def\cs{{\mathcal S}}

\def\cu{{\mathcal U}}

\newcommand{\bn}{\mathbb N}
\def\br{{\mathbb R}}

\def\bz{{\mathbb Z}}

\renewcommand{\a}{\alpha}

\def\g{\gamma} 
\def\d{\delta} 
\def\eps{\varepsilon}


\def\l{\lambda} 
\def\m{\mu}

\def\p{\pi}

\def\s{\sigma} 
 
\def\t{\tau}

\def\f{\varphi}

\def\om{\omega} \def\O{\Omega}

\DeclareMathOperator{\tr}{tr}
\DeclareMathOperator{\Res}{Res}
\DeclareMathOperator{\vol}{\mu_d}

\DeclareMathOperator{\Lim}{Lim}


\def\length{\mathrm{length}}
\def\size{\mathrm{size}}
\def\level{\mathrm{level}}
\newcommand{\dg}{d_{\rm geo}}
\newcommand{\Balg}{\cb_{\text{fin}}}

\newcommand{\ps}[2]{( #1 , #2 )}

\begin{document}

\newcommand{\arXivNumber}{2005.14225}

\renewcommand{\thefootnote}{}

\renewcommand{\PaperNumber}{020}

\FirstPageHeading

\ShortArticleName{A Spectral Triple for a Solenoid Based on the Sierpinski Gasket}

\ArticleName{A Spectral Triple for a Solenoid Based\\ on the Sierpinski Gasket\footnote{This paper is a~contribution to the Special Issue on Noncommutative Manifolds and their Symmetries in honour of~Giovanni Landi. The full collection is available at \href{https://www.emis.de/journals/SIGMA/Landi.html}{https://www.emis.de/journals/SIGMA/Landi.html}}}

\Author{Valeriano AIELLO~$^{\rm a}$, Daniele GUIDO~$^{\rm b}$ and Tommaso ISOLA~$^{\rm b}$}

\AuthorNameForHeading{V.~Aiello, D.~Guido and T.~Isola}

\Address{$^{\rm a)}$~Mathematisches Institut, Universit\"at Bern, Alpeneggstrasse 22, 3012 Bern, Switzerland}
\EmailD{\href{mailto:valerianoaiello@gmail.com}{valerianoaiello@gmail.com}}

\Address{$^{\rm b)}$~Dipartimento di Matematica, Universit\`a di Roma ``Tor Vergata'', I--00133 Roma, Italy}
\EmailD{\href{mailto:guido@mat.uniroma2.it}{guido@mat.uniroma2.it}, \href{mailto:isola@mat.uniroma2.it}{isola@mat.uniroma2.it}}

\ArticleDates{Received June 23, 2020, in final form February 10, 2021; Published online March 02, 2021}

\Abstract{The Sierpinski gasket admits a locally isometric ramified self-covering. A semifinite spectral triple is constructed on the resulting solenoidal space, and its main geometrical features are discussed.}

\Keywords{self-similar fractals; noncommutative geometry; ramified coverings}
\Classification{58B34; 28A80; 47D07; 46L05}

\renewcommand{\thefootnote}{\arabic{footnote}}
\setcounter{footnote}{0}

\section{Introduction}
In this note, we introduce a semifinite spectral triple on the $C^*$-algebra of continuous functions on the solenoid associated with a self-covering of the Sierpinski gasket. Such triple is finitely summable, its metric dimension coincides with the Hausdorff dimension of the gasket, and the associated non-commutative integral coincides up to a constant with a Bohr--F{\o}lner mean on the solenoid, hence reproduces the suitably normalized Hausdorff measure on periodic functions. The open infinite Sierpinski fractafold with a unique boundary point considered by Teplyaev~\cite{Tep} embeds continuously as a dense subspace of the solenoid, and the Connes distance restricted to such subspace reproduces the geodesic distance on such fractafold.
On the one hand, this shows that our spectral triple describes aspects of both local and coarse geometry~\cite{RoeLN}.
On the other hand, this implies that the topology induced by the Connes distance, being non compact, does not coincide with the weak$^*$-topology on the states of the solenoid algebra, as we call the $C^*$-algebra of continuous functions on the solenoid. This means that the solenoid, endowed with our spectral triple, is not a quantum metric space in the sense of Rieffel~\cite{Rieffel}.

Related research concerning projective limits of (possibly quantum) spaces and the associated solenoids appeared recently in the literature. In the framework of noncommutative geometry, we~mention:
\cite{LaPa}, where projective families of compact quantum spaces have been studied, showing their convergence to the solenoid w.r.t.\ the Gromov--Hausdorff propinquity distance;
\cite{AGI01}, where, in the same spirit as in this note, a semifinite spectral triple has been associated with the projective limit generated by endomorphisms of $C^*$-algebras associated with commutative and noncommutative spaces;
\cite{DGMW}, where a spectral triple on the stable Ruelle algebra for Wieler solenoids has been considered and its unboundedd KK-theory has been studied, based on the Morita equivalence between the stable Ruelle algebra and a Cuntz--Pimsner algebra. In the same paper these techniques are used for the study of limit sets of regular self-similar groups (cf.~\cite{Nekra}).

When fractals are concerned, we mention the projective family of finite coverings of the octahedron gasket considered in~\cite{Stri2009}, where, as in our present situation, an intermediate infinite fractafold between the tower of coverings and the projective limit is considered. Periodic and almost periodic functions on the infinite fractafold are considered, and a Fourier series description for the periodic functions is given, based on periodic eigenfunctions of the Laplacian (cf.\ also~\cite{RuStri} for higher-dimensional examples). Let us remark that such coverings, as the ones considered in~this paper, are not associated with groups of deck transformations.

The starting point for the construction of this paper is the existence of a locally isometric ramified three-fold self-covering of the Sierpinski gasket with trivial group of deck transformations. Such self-covering gives rise to a projective family of coverings, whose projective limit is by definition a solenoid. Dually, the algebras of continuous functions on the coverings form an~injective family, whose direct limit (in the category of $C^*$-algebras) is the solenoid algebra.
In~\cite{AGI01} we already considered various examples of self-coverings or, dually, of endomorphisms of some $C^*$-algebras, most of which were regular finite self-coverings. There we constructed a~spectral triple on the solenoid algebra as a suitable limit of~spectral triples on the algebras of continuous functions on the coverings. Given a spectral triple on the base space, attaching a~spectral triple to a finite covering is not a difficult task, and in our present case consists simply in ``dilating'' the triple on the base gasket so that the projections are locally isometric. However, there is no commonly accepted procedure to define a limit of spectral triples. Since the method used in~\cite{AGI01} cannot be used here (see below), we follow another route, in a sense spatializing the construction, namely showing that there exists an open fractafold which is intermediate between the projective family of coverings and the solenoid.
 More precisely, such fractafold space turns out to be an~infinite covering of each of the finite coverings of the family, and embeds in~a~continuous way in the solenoid. In this way all the algebras (and their direct limit) will act on a suitable $L^2$-space of the open fractafold, as do the Dirac operators of the associated spectral triples. In~this way the limiting Dirac operator is well defined, but the compact resolvent property will be~lost.

Let us notice here that we are not constructing a spectral triple on the open fractafold, where a weaker compact resolvent property (cf.~\cite[Chapter~IV, Remark~12]{ConnesBook}) is retained, namely $f(D^2+I)^{-1/2}$ is a compact operator, where $D$ is the Dirac operator and $f$ is any function with compact support on the fractafold.
Since we are constructing a spectral triple on the solenoid, which is a compact space, the weaker form does not help.

In order to recover the needed compactness of the resolvent, we use a procedure first proposed by J. Roe for open manifolds with an amenable exhaustion in~\cite{Roe,Roe-2}, where, based on the observation that the von~Neumann trace used by Atiyah~\cite{Atiyah} for his index theorem for covering manifolds can be reformulated in the case of amenable groups via the F{\o}lner condition, he considered amenable exhaustions on open manifolds and constructed a trace for finite-propagation operators acting on sections of a fiber bundle on the manifold via a renormalization procedure.
Unfortunately such trace is not canonical, since it depends on a generalized limit procedure. However, in the case of infinite self-similar CW-complexes, it was observed in~\cite{CGIs01} that such trace becomes canonical when restricted to the $C^*$-algebra of geometric operators.

We adapt these results to our present context, namely we replace the usual trace with a~renor\-ma\-li\-zed trace associated with an exhaustion of the infinite fractafold. Such trace comes together with a noncommutative $C^*$-algebra, the algebra of geometric operators, which is similar in spirit to the Roe $C^*$-algebras of coarse geometry~\cite{HiRo,Roe,Roe-2,Roe96,WiYu}. This algebra contains the solenoid algebra, and the limiting Dirac operator is affiliated to it in a suitable sense. Such Dirac operator turns out to be $\tau$-compact w.r.t.~the renormalized trace. We refer to~\cite{CGIs01, GuIs7} for an analogous construction of the $C^*$-algebra and of a canonical trace based on the self-similarity structure.

As discussed above, the starting point for the construction of a spectral triple on the solenoid algebra is the association of a spectral triple to the fractal known as the Sierpinski gasket~\cite{Sierpinski}.

The study of fractal spaces from a spectral, or noncommutative, point of view has now a~long history, starting from the early papers of Kigami and Lapidus~\cite{KiLa, La94,La97}.
As for the spectral triples, various constructions have been considered in the literature, mainly based on~``small'' triples attached to specific subsets of the fractal, following a general procedure first introduced by Connes, then considered in~\cite{GuIs9,GuIs10}, and subsequently abstracted in~\cite{ChIv07}.
More precisely, the spectral triple on the Cantor set described by Connes~\cite{ConnesBook} inspired two kinds of~spectral triples for various families of fractals in~\cite{GuIs9,GuIs10}. These triples were further analysed in~\cite{GuIs16} for the class of nested fractals. Such specral triples are obtained as direct sums of triples on two points (boundary points of an edge in some cases), and we call them discrete spectral triples. We~then mention some spectral triples obtained as direct sums of spectral triples on 1-dimensional subsets, such as those considered in~\cite{Arau,CIL,CIS,CGIS02,LaSa}, where the 1-dimensional subsets are segments, circles or quasi-circles. Discrete spectral triples give a good description of metric aspects of the fractal, such as Hausdorff dimension and measure and geodesic distance, and, as shown in~\cite{GuIs1}, may also reconstruct the energy functional (Dirichlet form) on the fractal, but are not suited for the study of K-theoretical properties since the pairing with K-theory is trivial. Conversely, spectral triples based on segments or circles describe both metric and K-theoretic properties of~the fractal but can't be used for describing the Dirichlet form. Finally, the spectral triple based on quasi-circles considered in~\cite{CGIS02} describes metric and K-theoretic aspects together with the energy form, but requires a rather technical approach.

In the present paper, we make use of the simple discrete spectral triple on the gasket as described in~\cite{GuIs16}, thus obtaining a semifinite spectral triple on the solenoid algebra which recovers the metric dimension and the Bohr--F{\o}lner mean of the solenoid, and the geodesic distance on the infinite fractafold.
Further analysis on the solenoid is possible, e.g., the construction of a Dirichlet form via noncommutative geometry or the study of K-theoretic properties. As explained above, the latter step will require a different choice of the spectral triple on the base gasket, such as the triples considered in~\cite{CIL,CIS,CGIS02}, which admit a non-trivial pairing with the K-theory of the gasket.

As already mentioned, our aim here is to show that the family of spectral triples on the finite coverings produces a spectral triple on the solenoidal space. In the examples considered in~\cite{AGI01}, the family of spectral triples had a simple tensor product structure, namely the Hilbert spaces were a tensor product of the Hilbert space $\ch$ for the base space and a finite dimensional Hilbert space, and the Dirac operators could be described as (a finite sum of) tensor product operators. Then the ambient $C^*$-algebra turned out to be a product of $\cb(\ch)$ and a UHF algebra, allowing a GNS representation w.r.t.~a~semifinite trace.

In the example treated here we choose a different approach since two problems forbid such simple description. The first is a local problem, due to the ramification points. This implies that the algebra of a covering is not a free module on the algebra of the base space; in particular,
functions on a covering space form a proper sub-algebra of the direct sum of finitely many copies of the algebra for the base space.
The second is a non-local problem which concerns the Hilbert spaces, which are $\ell^2$-spaces on edges, and the associated operator algebras. Indeed, the Hilbert spaces of the coverings cannot be described as finite sums of copies of the Hilbert space on the base space due to the appearance of longer and longer edges on larger and larger coverings.

We conclude this introduction by mentioning two further developments of the present analysis.

First, the construction of the spectral triple on the solenoid algebra allows the possibility of~lifting a spectral triple from a $C^*$-algebra to the crossed product of the C*-algebra with a~single endomorphism~\cite{AGI02}, thus generalising the results on crossed products
with an automorphism group considered in~\cite{BMR,Skalski,Paterson}.

Second, we observe that the construction given in the present paper goes in the direction of~possibly defining a $C^*$-spectral triple, in which the semifinite von Neumann algebra is replaced by a $C^*$-algebra with a trace to which both the Dirac operator and the ``functions'' on the non-commutative space are affiliated, where the compactness of the resolvent of the Dirac operator is measured by the trace on the $C^*$-algebra, cf.~also~\cite{GuIs4}.

This paper is divided in six sections. After this introduction, Section~\ref{sec2} contains some pre\-li\-mi\-nary notions on fractals and spectral triples, Section~\ref{sec3} describes the geometry of the ramified covering and the corresponding inductive structure, together with its functional counterpart given by a family of compatible spectral triples. Section~\ref{sec4} concerns the self-similarity structure of the Sierpinski solenoid, whence the description of the inductive family of $C^*$-algebras as algebras of bounded functions on the fractafold. The Section~\ref{sec5} describes the algebra of geometric operators and the construction of a semicontinuous semifinite trace on it. Finally, the semifinite spectral triple together with its main features are contained in Section~\ref{sec6}.

\section{Preliminaries}\label{sec2}

In this section we shall briefly recall various notions that will be used in the paper.
Though these notions are well known among the experts, our note concerns different themes, namely spectral triples in noncommutative geometry and nested fractals (the Sierpinski gasket in particular), so~that we decided to write this section with the aim of helping readers with different background to follow the various arguments, by collecting here the main notions and results that will be useful in the following.

\subsection{Spectral triples}

The notion of spectral triple plays a key role in Alain Connes'non\-com\-mu\-ta\-ti\-ve geometry~\cite{ConnesBook,GBVF}. Basically, it consists of a triple $(\cl,\ch,D)$, where $\cl$ is a *-algebra acting faithfully on the Hilbert space $\ch$, and $D$ is an unbounded self-adjoint operator on $\ch$ satisfying the properties
\begin{itemize}\itemsep=0pt
\item[$(1)$] $\big(1+D^2\big)^{-1/2}$ is a compact operator,
\item[$(2)$] $\pi(a)\mathcal{D} (D) \subset \mathcal{D} (D)$, and $[D,\pi(a)]$ is bounded for all $a\in \cl$.
\end{itemize}
We shall also say that $(\cl,\ch,D)$ is a spectral triple on the $C^*$-algebra $\ca$ generated by~$\cl$.

Such triple is meant as a generalization of a compact smooth manifold, the algebra $\cl$ replacing the algebra of smooth functions, the Hilbert space describing a vector bundle (a spin bundle indeed) on which the algebra of functions acts, and the operator~$D$ generalizing the notion of Dirac operator.
Further structure may be added to the properties above, allowing deeper analysis of the geometric features of the noncommutative manifold, but these are not needed in~this paper.

Property $(2)$ above allows the definition of a (possibily infinite) distance (Connes distance) on the state space of the $C^*$-algebra $\ca$ generated by $\cl$ , defined as
\begin{gather*}
d(\f,\psi)=\sup\{|\f(a)-\psi(a)|\colon \|[D,a]\|\leq1,\, a\in \cl\}.
\end{gather*}
When the Connes distance induces the weak$^*$-topology on the state space, the seminorm $\|[D,a]\|$ on~$\ca$ is called a Lip-norm (cf.~\cite{Rieffel}) and the algebra $\ca$ endowed with the Connes distance is a~quantum metric space.

A spectral triple is called finitely summable if $\big(1+D^2\big)^{-s}$ has finite trace for some $s>0$, in this case the abscissa of convergence~$d$ of the function $\tr\big(1+D^2\big)^{-s}$ is called the metric dimension of~the triple. Then the logarithmic singular trace introduced by Dixmier~\cite{Dix} may be used to define a noncommmutative integral on~$\ca$.
Let us denote by $\{\m_n(T)\}$ the sequence (with multiplicity) of~singular values of the compact operator~$T$, arranged in decreasing order.
Then, on the positive compact operators for which the sequence $\sum_{k=1}^n\m_n(T)$, is at most logarithimically divergent, we may consider the positive functional
\begin{gather*}
\tr_\om(T)=\Lim_\om\frac{\sum_{k=1}^n\m_n(T)}{\log n},
\end{gather*}
where $\Lim_\om$ is a suitable generalized limit. Such functional extends to a positive trace on $\cb(\ch)$ which vanishes on trace class operators, and is called Dixmier (logarithmic) trace.

If $\big(1+D^2\big)^{-d}$ is in the domain of the Dixmier trace, one defines the following noncommutative integral:
\begin{gather*}
\oint a=\tr_\om\big (a\big(I+D^2\big)^{-d/2}\big),\qquad a\in\ca.
\end{gather*}
When the function $\big(1+D^2\big)^{-s}$ has a finite residue for $s=d$, such residue turns out to coincide, up to a constant, with the Dixmier trace, which therefore does not depend on the generalized limit procedure (cf.~\cite{ConnesBook}, and~\cite[Theorem~3.8]{CPS}):
\begin{gather*}
d\cdot\tr_\om \big(a\big(I+D^2\big)^{-d/2}\big)={\Res}_{s=d} \tr(a|D|^{-s}).
\end{gather*}
We note in passing that spectral triples may also describe non-compact smooth manifolds, with the algebra~$\cl$ describing smooth functions with compact support and property~(1) replaced by~$a\big(1+D^2\big)^{-1/2}$ is a compact operator for any $a\in\cl$.

\subsection{Semifinite spectral triples}\label{SemST}
The notion of spectral triple has been generalized to the semifinite case, by replacing the ambient algebra $\cb(\ch)$ with a semifinite von~Neumann algebra $\cam$ endowed with a normal semifinite faithful trace~$\t$.
We recall that an operator $T$ affiliated with $(\cam,\tau)$ is called $\tau$-compact if its generalized s-number function $\mu_t(T)$ is infinitesimal or, equivalently, if $\tau(e_{(t,\infty)}(T))<\infty$, for~any $t>0$ (cf.~\cite[Section~1.8, p.~34]{Fack}, \cite[Proposition~3.2]{FK}).

\begin{dfn} [\cite{CaPhi1}]\label{def:SFtriple}
	An odd semifinite spectral triple $(\cl,\cam,D)$ on a unital C$^*$-algebra $\ca$ is given by a unital, norm-dense, $^*$-subalgebra $\cl\subset\ca$, a semifinite von Neumann algebra $(\cam,\tau)$, acting on a (separable) Hilbert space $\ch$, a faithful representation $\pi\colon\ca\to\cb(\ch)$ such that $\pi(\ca)\subset\cam$, and an unbounded self-adjoint operator $D \widehat{\in} \cam$ such that
	\begin{itemize}\itemsep=0pt
\item[$(1)$] $\big(1+D^2\big)^{-1/2}$ is a $\tau$-compact operator,
\item[$(2)$] $\pi(a)\mathcal{D} (D) \subset \mathcal{D} (D)$, and $[D,\pi(a)] \in\cam$, for all $a\in\cl$.
\end{itemize}
\end{dfn}
As in the type $I$ case, such triple is called finitely summable if $\big(1+D^2\big)^{-s}$ has finite trace for some $s>0$, and $d$ denotes the abscissa of convergence of the function $\tau\big(1+D^2\big)^{-s}$, and is called the metric dimension of the triple.
The logarithmic Dixmier trace associated with the normal trace $\t$ may be defined in this case too, (cf.~\cite{CPS, GuIs1}) and, when the function $\big(1+D^2\big)^{-s}$ has a~finite residue for $s=d$, the equality $d\cdot\tr_\omega\big(a|D|^{-d}\big)={\Res}_{s=d} \tr\big(a|D|^{-s}\big)$ still holds \cite[Theorem~3.8]{CPS}.

\subsection{Self-similar fractals}
Let $\O:= \{w_i \colon i=1,\ldots,k \}$ be a family of contracting similarities of $\br^{N}$, with scaling para\-me\-ters~$\{\l_i\}$. The unique non-empty compact subset $K$ of $\br^{N}$ such that $K = \bigcup_{i=1}^{k} w_i(K)$ is called the {\it self-similar fractal} defined by $\{w_i \}_{i=1,\ldots,k}$. For any $i\in\{1,\ldots,k\}$, let $p_i\in\br^N$ be the unique fixed-point of $w_i$, and say that $p_i$ is an essential fixed-point of $\O$ if there are $i',j,j'\in\{1,\ldots,k\}$ such that $i'\neq i$, and $w_j(p_i)=w_{j'}(p_{i'})$. Denote by $V_0(K)$ the set of essential fixed-points of $\O$, and let $E_0(K):=\{ (p,q)\colon p,q\in V_0,\ p\neq q\}$. Observe that $(V_0,E_0)$ is a directed finite graph whose edges are in $1:1$ correspondence with ordered pairs of distinct vertices.
\begin{dfn}
We call an element of the family $\{w_{i_1}\cdots w_{i_k}(K)\colon k\geq0\}$ a {\it cell}, and call its diameter the size of the cell.
We call an element of the family $E(K)=\{w_{i_1}\cdots w_{i_k}(e)\colon k\geq0$, $e\in E_{0}(K)\}$ an {\it $($oriented$)$ edge} of $K$. We denote by $e^-$ resp.~$e^+$ the source, resp.~the target of~the oriented edge $e$.
\end{dfn}
As an example, the Sierpinski gasket is the self-similar fractal determined by 3 similarities with scaling parameter 1/2 centered in the vertices of an equilateral triangle (see Fig.~\ref{fig:Gasket}).

\begin{figure}[t]\centering
\includegraphics[scale=1]{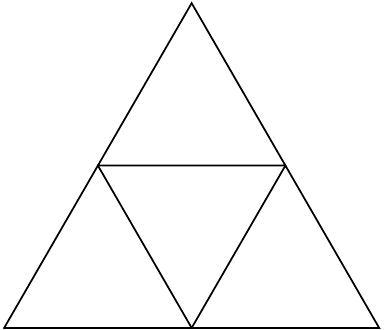}
\qquad\qquad
\includegraphics[scale=1]{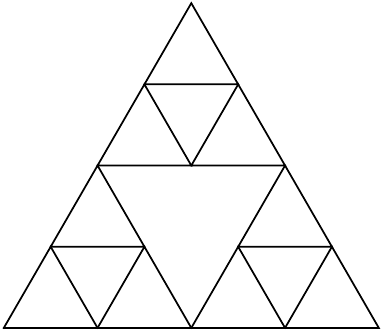}
\\[2ex]
\includegraphics[scale=1]{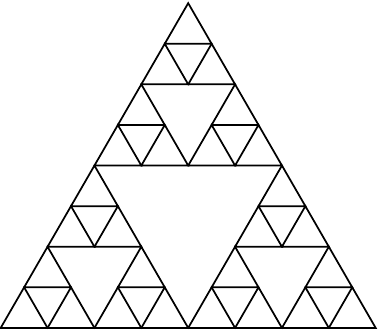}
\qquad\qquad
\includegraphics[scale=1]{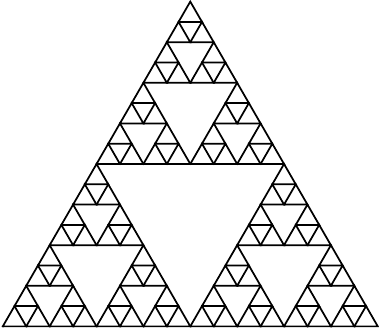}
\caption{The first four steps of the construction of the gasket.}\label{fig:Gasket}
\end{figure}

Under suitable conditions, the Hausdorff dimension $d_H$ of a self-similar fractal coincides with its scaling dimension, namely with the only positive number $d$ such that $\sum_{i=1}^k\l_i^d=1$, therefore when all scaling parameters coincide with $\l$ we have $d_H=\frac{\log k}{\log(1/\l)}$. In particular, the Hausdorff dimension of the Sierpinski gasket is $\frac{\log 3}{\log2}$.
We note in passing that one of the most important aspects of the Sierpinski gasket and of more general classes of fractals is the existence of a self-similar diffusion, associated with a Dirichlet form, see, e.g.,~\cite{Kiga2}.
Even though Dirichlet forms on~fractals can be recovered in the noncommutative geometry framework~\cite{GuIs16}, and in particular by~means of the spectral triples which we use in this paper, we do not analyse this aspect in the present note.

In~\cite{GuIs10} discrete spectral triples have been introduced on some classes of fractals, generalizing an example of Connes in~\cite[Chapter~4.3, Example~23]{ConnesBook}. Such triples have been further studied in~\cite{GuIs16} for nested fractals.
On a self-similar fractal $K$, the triple $ (\cl,\ch,D)$ on the $C^*$-algebra $\ca=\cc(K)$ is defined as follows:
\begin{dfn}\label{STnested}\qquad
\begin{itemize}\itemsep=0pt
\item[$(a)$] $\ch=\ell^2(E(K))$,
\item[$(b)$] $\ca$ acts on the Hilbert space as $\rho(f)e=f(e^+)e$, $f\in\ca_n$, $e\in E_n$,
\item[$(c)$] $F$ is the orientation-reversing map on edges,
\item[$(d)$] $D$ maps an edge $e\in E(K)$ to $\length(e)^{-1}Fe$,
\item[$(e)$] $\cl$ is given by the elements $ f\in \ca$ such that $\|[D,\rho(f)]\|<\infty$.
\end{itemize}
\end{dfn}
It turns out that $\cl$ coincides with the algebra of Lipschitz functions on $K$, hence is dense in~$\ca$, and the seminorm $L(f):=\|[D,f]\|$ is a Lip-norm. By Theorem 3.3 in~\cite{GuIs16}, see also Remark~2.11 in~\cite{GuIs10}, the triple $(\cl,\ch,D)$ is a finitely summable spectral triple on $\ca$, its metric dimension coincides with the Hausdorff dimension, and the noncommutative integral recovers the Hausdorff measure up to a constant:
\begin{gather}\label{fractalNCint}
 \oint f=\tr_\omega\big(f|D|^{-d}\big) = \frac{1}{ \log k} \sum_{e\in E_0(K)} \ell(e)^d \int_K f\, {\rm d}H_d, \qquad f\in C(K),
\end{gather}
where $H_d$ denotes the normalized Hausdorff measure on the fractal $K$.
Moreover, in some cases, and in particular for the Sierpinski gasket, the Connes distance induced by the Lip-norm $L(f):=\|[D,f]\|$ coincides with the geodesic distance on the points of the gasket $K$, see~\cite[Corollary~5.14]{GuIs16}.

\subsection{Covering fractafolds and solenoids}

Generally speaking, a solenoid is the inverse limit of a projective family of coverings of a given space~\cite{McCord}. Dually, the solenoid algebra is the direct limit of the family of algebras of continuous functions on the spaces of the projective family. In this sense the notion of solenoid makes sense for injective families of $C^*$-algebras, cf., e.g.,~\cite{AGI01} for sequences generated by a single endomorphism and~\cite{LaPa} for sequences of compact quantum spaces. Other examples of the treatment of solenoids in the recent literature have been mentioned in the introduction.

The notion of fractafold as a connected Hausdorff topological space such that every point has a neighborhood homeomorphic to a neighborhood in a given fractal has been introduced in~\cite{Stri2003}, even though examples of such notion were already considered before, e.g., in~\cite{BaPe,Stri1996,Tep}.
In some cases projective families of covering fractafold spaces related to the Sierpinski gasket have been considered.

Since the gasket does not admit a simply connected covering, one may consider coverings where more and more cycles are unfolded, in particular consider the regular infinite abelian covering $S_n$ where all the cycles of size at least $2^{-n}$ are unfolded. Each of those is a closed fractafold (with boundary) and they form a projective family.
The associated solenoid~$S_\infty$, i.e., the projective limit, which turns out to be an abelian counterpart of the Uniform Universal Cover introduced by Berestovskii and Plaut~\cite{BerPla}, has been considered in~\cite{CGIS13}, where it is shown that any locally exact 1-form on the gasket possesses a potential on~$S_\infty$.

Another projective family of covering fractafolds has been considered in~\cite{Stri2009}, each element of~the family being a compact finite covering of the octahedral fractafold modeled on the gasket. Any element of the family is covered by the infinite Sierpinski gasket with a unique boundary point, which we call $K_\infty$ here (see Fig.~\ref{fig:infiniteBlowup}), considered in~\cite[Lemma~5.11]{Tep}. The solenoid associated with the projective family is also mentioned explicitly in~\cite{Stri2009}, together with the dense embedding of $K_\infty$ in it, and also a Bohr--F{\o}lner mean on the solenoid is considered (p.~1199).
\begin{figure}[t]\centering
\includegraphics[scale=1.03]{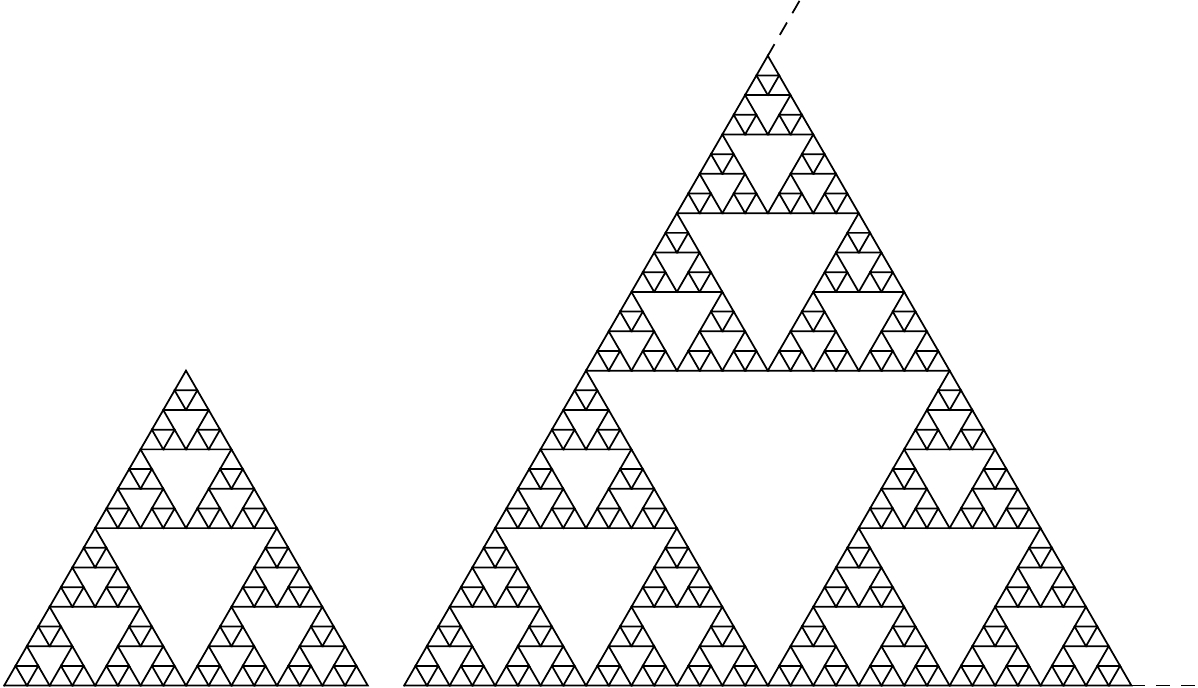}
\caption{The gasket and its infinite blowup.}\label{fig:infiniteBlowup}
 \end{figure}

In the present paper a self-covering of the gasket gives rise to a projective family of finite ramified coverings, the fractafold $K_\infty$ projects onto each element of the family and embeds densely in the solenoid, and we recover the Bohr--F{\o}lner mean on the solenoid via a noncommutative integral.

\section{A ramified covering of the Sierpinski gasket}\label{sec3}

Let us choose an equilateral triangle of side 1 in the Euclidean plane with vertices $v_0$, $v_1$, $v_2$ (numbered in a counterclockwise order) and consider the associated Sierpinski gasket as in the previous section, namely the set $K$ such that
\begin{gather*}
K=\bigcup_{j=0,1,2}w_j(K),
\end{gather*}
where $w_j$ is the dilation around $v_j$ with contraction parameter $1/2$.
Clearly, for the cell $C=w_{i_1}\cdots w_{i_k}(K)$, $\size(C)=2^{-k}$ and, if $e_0\in E_{0}(K)$ and $e=w_{i_1}\cdots w_{i_k}(e_0)$, $\length(e)=2^{-k}$.

\looseness=1
5In the following we shall set $K_0:=K$, $E_0=E_0(K)$, $K_n=w_0^{-n}K_0$.
Let us now consider the middle point $x_{i,i+1}$ of the segment $\big(w_0^{-1}v_i,w_0^{-1}v_{i+1}\big)$, $i=0,1,2$, the map $R_{i+1,i}\colon w_0^{-1}w_{i}K\to w_0^{-1}w_{i+1}K$ consisting of the rotation of $\frac43\pi$ around the point $x_{i,i+1}$, $i=0,1,2$, and observe that
\begin{gather}\label{IdOnCells}
R_{i,i+2}\circ R_{i+2,i+1}\circ R_{i+1,i} = {\rm id}_{w_0^{-1}w_i K},\qquad i=0,1,2.
\end{gather}
Setting $R_{i,i+1} = R_{i+1,i}^{-1}$, the previous identities may also be written as
\begin{gather*}
R_{i+2,i+1}\circ R_{i+1,i} = R_{i+2,i},\qquad i=0,1,2.
\end{gather*}

We then construct the map $p\colon K_1\to K$ given by
\begin{gather*}
p(x)=
\begin{cases}
x,&x\in K,
\\
R_{0,1}(x),&x\in w_0^{-1}w_1 K,
\\
R_{0,2}(x),&x\in w_0^{-1}w_2 K,
\end{cases}
\end{gather*}
and observe that this map, which appears to be doubly defined in the points $x_{i,i+1}$, $i=0,1,2$, is indeed well defined (see Fig.~\ref{fig:covering}).

\begin{figure}[t]\centering%
\scalebox{3.8}{\def\trianglewidth{2cm}%
\pgfdeclarelindenmayersystem{Sierpinski triangle}{%
\symbol{X}{\pgflsystemdrawforward}%
\symbol{Y}{\pgflsystemdrawforward}%
\rule{X -> X-Y+X+Y-X}%
\rule{Y -> YY}%
}%
\foreach \level in {7}{%
\tikzset{%
l-system={step=\trianglewidth/(2^\level), order=\level, angle=-120}%
}%
\begin{tikzpicture}%
\fill [black] (0,0) -- ++(0:\trianglewidth) -- ++(120:\trianglewidth) -- cycle;%
\draw [draw=none] (0,0) l-system [l-system={Sierpinski triangle, axiom=X},fill=white];%
\draw[line width=0.05mm, <-] (.75,-.1) to[out=-90,in=-90] (1.25,-.1);%
\draw[line width=0.05mm, <-] (0.25,.6) to[out=135,in=120] (.5,1);%
\node (bbb) at (-.05,-0.075) {$\scalebox{.25}{$v_0$}$};%
\node (bbb) at (.395,.87) {$\scalebox{.25}{$x_{2,0}$}$};%
\node (bbb) at (1.6,.87) {$\scalebox{.25}{$x_{1,2}$}$};%
\node (bbb) at (1,1.8) {$\scalebox{.25}{$w_0^{-1}v_2$}$};%
\node (bbb) at (1,-.075) {$\scalebox{.25}{$x_{0,1}=v_1$}$};%
\node (bbb) at (2.05,-.075) {$\scalebox{.25}{$w_0^{-1}v_1$}$};%
\node (bbb) at (0.1,.87) {$\scalebox{.25}{$R_{0,2}$}$};%
\node (bbb) at (1,-.35) {$\scalebox{.25}{$R_{0,1}$}$};%
\end{tikzpicture}%
}%
}%
\vspace{-4ex}\caption{The covering map $p\colon K_1\to K$.}\label{fig:covering}
\end{figure}

The following result is easily verified.
\begin{prop}
The map $p$ is a well defined continuous map which is a ramified covering, with ramification points given by $\{x_{i,i+1},i=0,1,2\}$. Moreover, the covering map is isometric on suitable neighbourhoods of the non-ramification points.
\end{prop}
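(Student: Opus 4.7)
The plan is to verify in turn that $p$ is well-defined, continuous, a ramified covering with the stated ramification set, and a local isometry at non-ramification points. The strategy rests on two elementary structural observations: first, $K_1=w_0^{-1}K$ decomposes as $K\cup w_0^{-1}w_1K\cup w_0^{-1}w_2K$, and as with any three sub-cells at the same level in the gasket, these meet pairwise only at a single common vertex, which is precisely the corresponding midpoint $x_{i,i+1}$; second, each of the three branches defining $p$ is the restriction of a planar rigid motion (the identity, or one of the rotations $R_{0,1}$, $R_{0,2}$).

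For well-definedness one needs to check consistency of the two definitions exactly at the three pairwise intersection points. At $x_{0,1}$ the $K$-branch returns $x_{0,1}$ itself, while the $w_0^{-1}w_1K$-branch returns $R_{0,1}(x_{0,1})$, which coincides with $x_{0,1}$ since $R_{0,1}$ is by definition a rotation centered at $x_{0,1}$; the point $x_{2,0}$ is analogous. The nontrivial case is $x_{1,2}$, where the two relevant branches give $R_{0,1}(x_{1,2})$ and $R_{0,2}(x_{1,2})$. Evaluating the identity \eqref{IdOnCells} with $i=0$ at $x_{1,2}$ and using that $R_{2,1}$ fixes $x_{1,2}$ yields $R_{0,2}\circ R_{1,0}(x_{1,2})=x_{1,2}$, whence $R_{0,1}(x_{1,2})=R_{1,0}^{-1}(x_{1,2})=R_{0,2}(x_{1,2})$, as required.

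Continuity then follows from the gluing lemma: the three sub-cells are closed in $K_1$, each branch is continuous (indeed isometric) into the plane, and the branches agree on the pairwise intersections. For the ramified covering and local isometry claims, observe that any point $y\in K_1\setminus\{x_{0,1},x_{1,2},x_{2,0}\}$ lies in exactly one sub-cell; since the other two sub-cells are at positive Euclidean distance from $y$, a sufficiently small Euclidean ball around $y$ meets $K_1$ only inside that single sub-cell, where $p$ acts as a planar rigid motion. Hence $p$ is a local isometric homeomorphism at every such $y$, and the three sub-cell branches realize three local sheets of the covering; these sheets merge precisely at $\{x_{0,1},x_{1,2},x_{2,0}\}$, which are therefore the ramification points, while surjectivity onto $K$ is automatic since each branch already maps onto $K$.

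The only real obstacle is the bookkeeping identity $R_{0,1}(x_{1,2})=R_{0,2}(x_{1,2})$ at the interior ramification point: this is the one place where the cocycle relation \eqref{IdOnCells} is genuinely used, the other two compatibility checks being tautological fixed-point statements. All remaining claims reduce to the standard post-critically finite incidence structure of the gasket and to the fact that rigid motions are isometries.
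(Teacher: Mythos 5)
The paper offers no proof of this proposition (it is stated as ``easily verified''), so your write-up is necessarily more detailed than the source; your overall strategy --- decompose $K_1$ into the three level-one cells meeting pairwise in the single points $x_{i,i+1}$, note that each branch of $p$ is a planar rigid motion, check compatibility at the three intersection points, and invoke the gluing lemma plus positive separation of the other two cells at any non-ramification point --- is exactly the intended verification, and your identification of the one non-trivial compatibility check (at $x_{1,2}$, where neither branch is a rotation centered at the point) is the right key observation.

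There is, however, a concrete slip in how you extract that compatibility from \eqref{IdOnCells}. The instance with $i=0$ is an identity of maps on $w_0^{-1}w_0K=K$, and $x_{1,2}\notin K$, so it cannot be ``evaluated at $x_{1,2}$''; moreover the intermediate identity you state, $R_{0,2}\circ R_{1,0}(x_{1,2})=x_{1,2}$, is false as written (in coordinates with $v_0=(0,0)$, $v_1=(1,0)$, $v_2=(1/2,\sqrt3/2)$ one finds $R_{0,2}\circ R_{1,0}(x_{1,2})=(-3/2,\sqrt3/2)\neq x_{1,2}$, whereas both $R_{0,1}(x_{1,2})$ and $R_{0,2}(x_{1,2})$ equal $v_0$). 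The correct route is the cyclic instance $i=1$ of \eqref{IdOnCells}, namely $R_{1,0}\circ R_{0,2}\circ R_{2,1}=\mathrm{id}_{w_0^{-1}w_1K}$, which \emph{can} be evaluated at $x_{1,2}\in w_0^{-1}w_1K$; since $R_{2,1}$ is the rotation centered at $x_{1,2}$ it fixes that point, giving $R_{1,0}\bigl(R_{0,2}(x_{1,2})\bigr)=x_{1,2}$ and hence $R_{0,2}(x_{1,2})=R_{1,0}^{-1}(x_{1,2})=R_{0,1}(x_{1,2})$, which is what well-definedness requires. (Equivalently: rewrite the $i=0$ relation as $R_{0,2}\circ R_{2,1}=R_{0,1}$ and apply both sides to $x_{1,2}$.) With this one-line correction --- which uses precisely the two ingredients you named, the cocycle relation and the fixed point of $R_{2,1}$ --- the proof is complete and correct.
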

Since $K_1$ and $K$ are homeomorphic, this map may be seen as a self-covering of the gasket.
The map $p$ gives rise to an embedding $\a_{1,0}\colon \cc(K)\to\cc(K_1)$, hence, following~\cite{Cuntz}, to an inductive family of C$^*$-algebras $\ca_n=\cc(K_n)$, whose inductive limit $\ca_\infty$ consists of continuous function on the solenoidal space based on the gasket.
As in Definition~\ref{STnested}, we consider the triple $ (\cl_n,\ch_n,D_n)$ on the $C^*$-algebra $\ca_n$, $n\geq0$, where
$\ch_n=\ell^2(E_n)$, $E_n=\{w_0^{-n}e,\, e\in E_0\}$ (the set of oriented edges in $K_n$).

Let us also note that, since the covering projections are locally isometric and any Lip-norm $L_m(f)=\|[D_m,f]\|$ associated with the triple $(\ca_m,\ch_m,D_m)$ produces the geodesic distance on~$K_m$, we get $L_{m+q}(\a_{m+q,m}(f))=L_m(f)$, namely we obtain a seminorm on the algebraic inductive limit of the $\ca_n$'s.

\section[A groupoid of local isometries on the infinite Sierpinski fractafold]
{A groupoid of local isometries on the infinite \\Sierpinski fractafold}\label{sec4}

Let us consider the infinite fractafold $K_\infty=\cup_{n\geq0}K_n$~\cite{Tep} endowed with the Hausdorff measure $\vol$ of dimension $d=\frac{\log3}{\log2}$ normalized to be 1 on $K=K_0$, with the exhaustion $\{K_n\}_{n\geq0}$, and with the family of local isometries $R=\big\{R^n_{i+1,i},R^n_{i,i+1}\colon i=0,1,2, n\geq 0\big\}$, where $R^n_{i,j} = w_0^{-n}R_{i,j}w_0^{n}\colon C^n_j \to C^n_i$, and $C^n_i := w_0^{-n-1}w_iK$, $n\geq 0$, $i,j\in\{0,1,2\}$. We also denote by $s(\g)$ and $r(\g)$ the domain and range of the local isometry~$\g$. Such local isometries act on points and on oriented edges of $K_\infty$.

We say that the product of the two local isometries $\g_1$, $\g_2\in R$ is defined if $\g_2^{-1}(s(\g_1))\cap$ $s(\g_2) \neq \varnothing$. In this case we consider the product
\begin{gather*}
\g_1\cdot\g_2\colon\ \g_2^{-1}(s(\g_1))\cap s(\g_2)\to r\big(\g_1|_{s(\g_1))\cap r(\g_2)}\big).
\end{gather*}

We then consider the family $\cg$ consisting of all (the well-defined) finite products of isometries in $R$. Clearly, any $\g$ in $\cg$ is a local isometry, and its domain and range are cells of the same size. We set $\cg_n=\{g\in\cg\colon s(\g)\,\&\,r(\g)$ are cells of size $2^n\}$, $n\geq0$.

\begin{prop}
For any $n\geq0$, $C_1$, $C_2$ cells of size $2^n$, $\exists!\, \g\in\cg_n$ such that $s(\g)=C_1$, $r(\g)=C_2$. In particular, if $C$ has size $2^n$, the identity map of $C$ belongs to $\cg_n$, $n\geq0$.
\end{prop}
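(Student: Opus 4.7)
The plan is to prove existence and uniqueness separately, both by induction on a notion of depth in the hierarchy of cells; throughout, for any size-$2^n$ cell $C$ I write $M(C)$ for the smallest $M$ with $C\subset K_M$.

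For existence, the cocycle identity \eqref{IdOnCells}, rewritten at scale $n$ by conjugation with $w_0^{-n}$, reads $R^n_{i,i+2}\circ R^n_{i+2,i+1}\circ R^n_{i+1,i}=\mathrm{id}_{C^n_i}$ for $i=0,1,2$, so the identity on each of the three basic cells $C^n_0=K_n$, $C^n_1$, $C^n_2$ belongs to $\cg_n$. I would then prove, by induction on $M(C)$, that every size-$2^n$ cell $C$ admits an element $\g_C\in\cg_n$ with $s(\g_C)=C$ and $r(\g_C)=K_n$. The case $M(C)\le n+1$ is the observation above; for $M(C)>n+1$ the cell $C$ is disjoint from the interior of $K_{M-1}$, hence contained in one of the two sub-cells $C^{M-1}_1$ or $C^{M-1}_2$ of $K_M$. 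The rotation $R^{M-1}_{0,i}$ (restricted to $C$) then sends $C$ isometrically onto a cell $C'\subset K_{M-1}$ with $M(C')<M(C)$, and composing with $\g_{C'}$ yields $\g_C$. Taking $\g_{C_2}^{-1}\cdot\g_{C_1}$ delivers the required element of $\cg_n$ connecting $C_1$ to $C_2$, and the identity map of any size-$2^n$ cell arises as the special case $C_1=C_2$.

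For uniqueness, it is enough to show that any $\g\in\cg_n$ with $s(\g)=r(\g)=C$ equals $\mathrm{id}_C$, since for two candidate maps $\g_1,\g_2$ the product $\g_2^{-1}\cdot\g_1$ is such a self-map. Each generator is a planar rotation by $\pm 4\pi/3$, so any product extends uniquely to a Euclidean rotation of $\br^2$; a self-isometry of a gasket cell $C$ must therefore be either the identity or a rotation by $\pm 2\pi/3$ about the center of $C$. To exclude the non-trivial options, I would again induct on $M(C)$. At the base $C=C^n_i$ (where $M(C)=n+1$), one checks that the relations \eqref{IdOnCells} together with the inverse identities $R^n_{i,j}R^n_{j,i}=\mathrm{id}_{C^n_i}$ reduce every closed word in the level-$n$ generators to the empty word; for larger $M(C)$ one conjugates, as in the existence step, by the rotations $R^{M-1}_{0,i}$ to transport any self-map of $C$ to a self-map of a cell with strictly smaller $M$.

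The main difficulty is the uniqueness step: one has to show that the cocycle identities \eqref{IdOnCells} and the inverse relations suffice to trivialize \emph{every} closed word in the generators with matched source and range, even when the word freely mixes generators of different scales. The underlying geometric picture is that the tower $K_n\subset K_{n+1}\subset\cdots$ is organized as a tree of cells at each scale, with the elementary triangular loops at level $m$ killed by \eqref{IdOnCells} and with no new loops created by the interaction of different scales; turning this picture into a careful combinatorial reduction is the heart of the argument.
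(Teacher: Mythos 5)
Your existence argument is essentially the paper's: induct on the minimal $m$ with $C\subset K_m$ (the paper calls this $\level(C)$) and push $C$ down one level at a time with the descending rotations $R^{m-1}_{0,i}$, the base case being handled by \eqref{IdOnCells}. That half is fine.

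The uniqueness half, however, has a genuine gap, and you have flagged it yourself without closing it. Reducing to self-maps $\g\colon C\to C$ and observing that $\g$ extends to an orientation-preserving Euclidean isometry preserving $C$, hence is a rotation by $0$ or $\pm2\pi/3$ about the centre of $C$, is a correct and even elegant framing; but note that each generator contributes $\mp2\pi/3$ to the total rotation angle, so nothing formal prevents a closed word from having angle $\pm2\pi/3$ --- excluding this \emph{is} the uniqueness statement, not a simplification of it. Your proposed base case (``every closed word in the level-$n$ generators reduces to the empty word'') does not even address the actual difficulty, since a closed word based at a cell of size $2^n$ can freely interleave generators $R^m_{i,j}$ of all levels $m\ge n$, climbing up and down the tower; and the ``careful combinatorial reduction'' you defer to is precisely the missing proof. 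The paper supplies it by a normal-form argument: classify the generators as ascending ($R^m_{i,0}$), descending ($R^m_{0,i}$) and constant-level ($R^m_{i,j}$, $i,j\in\{1,2\}$), check that an ascending generator followed (after possible constant-level factors) by a descending one cancels, conclude that every word from $C$ to $K_n$ reduces to a product of descending generators, and finish by noting that at each stage the descending generator whose domain contains the current cell is unique. Some argument of this kind --- controlling how the different scales interact --- is indispensable, and your proposal as written does not contain one.
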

\begin{proof}
It is enough to show that for any cell $C$ of size $2^n$ there exists a unique $\g\in\cg_n$ such that $\g\colon C\to K_n$.
For any cell $C$, let $m=\level(C)$ be the minimum number such that $C\subset K_m$.
We~prove the existence: if $C$ has size $2^n$ and $\level(C)=m>n$, then $C\subset C^{m-1}_i$, for some $i=1,2$, hence $R^{m-1}_{0,i}(C) \subset K_{m-1}$. Iterating, the result follows.
The second statement follows directly by equation~\eqref{IdOnCells}.

As for the uniqueness, $\forall\, n\geq0$, we call $R^n_{i,0}$ ascending, $i=1,2$, $R^n_{0,i}$ descending, $i=1,2$, $R^n_{i,j}$~constant-level, $i,j\in\{1,2\}$.
Indeed, if $C \subset s(R^n_{i,0})$, then $\level(C)\leq n$ and $\level(R^n_{i,0}(C)) = n+1$; if $C \subset s(R^n_{0,i})$, then $\level(C)=n+1$, $\level(R^n_{0,i}(C))\leq n$ and $\level(R^n_{j,i}(C))= n+1$, $i,j\in\{1,2\}$, $n\geq0$.

The following facts hold:
\begin{itemize}\itemsep=0pt
\item The product $R^n_{l,k} \cdot R^m_{j,i}$ of two constant-level elements $R^n_{l,k}$, $R^m_{j,i}$ is defined iff $n=m$ and~$k=j$, therefore any product of constant-level elements in $R$ is either the identity map on~the domain or coincides with a single constant-level element.
\item Any product of constant level elements in $R$ followed by a descending element coincides with a single descending element: indeed, if the product of constant level elements is the identity, the statement is trivially true; if it coincides with a single element, say $R^n_{i,j}$ with~$i,j\in\{1,2\}$, then, by compatibility, the descending element should be $R^n_{0,i}$ so that the product is $R^n_{0,i},$ by equation~\eqref{IdOnCells}.
\item Given a cell $C$ with $\size (C)=2^n$ and $\level(C)>n$, the exists a unique descending element $\g \in R$ such that $C \subset s(\g)$: indeed, if $m=\level(C)$, then $C\subset C^{m-1}_i$, for some $i\in\{1,2\}$. The only descending element is then $\g=R^{m-1}_{0,i}$.
\item Any product of an ascending element followed by a descending one is the identity on the domain: indeed if the ascending element is $R^n_{i,0}$, then, by compatibility, the descending element should be $R^n_{0,i}$.
\end{itemize}

Now let $\size(C)=2^n$, $\g \in\cg_n$ such that $\g\colon C\to K_n$, $\g=\g_p\cdot\g_{p-1}\cdots\g_2\cdot\g_1$, where $\g_j\in R$, $1\leq j\leq p$. Since $\level(C) \geq \level(K_n) = n$, for any possible ascending element $\g_i$ there should be a $j>i$ such that $\g_j$ is descending. If $i+q$ is the minimum among such $j$'s, all terms $\g_j$, $i<j<i+q$, are constant-level, hence the product $\g_{i+q}\cdot\g_{i+q-1}\cdots \g_{i}={\rm id}_{s(\g_i)}$. Then, we note that $\g_p$ can only be descending. As a consequence, $\g$ can be reduced to a product of descending elements, and, by the uniqueness of the descending element acting on a given cell, we get the result.
\end{proof}

Let us observe that each $\cg_n$, and so also $\cg$, is a groupoid under the usual composition rule, namely two local isometries are composable if the domain of the first coincides with the range of the latter.

We now consider the action on points of the local isometries in $\cg$.

\begin{prop}
Let us define $\widetilde\ca_n$ as the algebra
\begin{gather*}
\widetilde\ca_n=\{f\in\cc_b(K_\infty)\colon f( \g(x))= f(x),\, x\in s(\g),\, \g\in\cg_n\}.
\end{gather*}
Then, for any $n\geq0$, the following diagram commutes,
\begin{gather*}
\begin{matrix}
\widetilde\ca_n & \subset & \widetilde\ca_{n+1} \\
\Big\downarrow\iota_n & & \Big\downarrow\iota_{n+1} \\
\ca_n & \mathop{\longrightarrow}\limits^{\a_{n+1,n}} & \ca_{n+1},
\end{matrix}
\end{gather*}
where $\iota_n\colon f\in\widetilde\ca_n \to f|_{K_n} \in \ca_n$ are isomorphisms.
Hence the inductive limit $\ca_\infty$ is isomorphic to a $C^*$-subalgebra of $\cc_b(K_\infty)$.
\end{prop}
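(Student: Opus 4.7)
My plan is to verify bijectivity of each $\iota_n$ first, then commutativity of the square, and finally invoke functoriality of the inductive limit to obtain the embedding $\ca_\infty \hookrightarrow \cc_b(K_\infty)$. The forward map $\iota_n \colon f \mapsto f|_{K_n}$ is a unital contractive $*$-homomorphism by inspection, since the restriction of a bounded continuous function on $K_\infty$ to the compact subset $K_n$ lies in $\cc(K_n) = \ca_n$.

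For injectivity, suppose $f \in \widetilde\ca_n$ with $f|_{K_n} = 0$. Given $x \in K_\infty$, I would pick $n' \geq n$ with $x \in K_{n'}$ and then a cell $C$ of size $2^n$ in $K_{n'}$ containing $x$. By the preceding proposition there is a unique $\gamma_C \in \cg_n$ with $s(\gamma_C) = C$ and $r(\gamma_C) = K_n$; then $\gamma_C(x) \in K_n$ and the invariance defining $\widetilde\ca_n$ gives $f(x) = f(\gamma_C(x)) = 0$. For surjectivity, given $g \in \ca_n$ I would define the candidate inverse by
\begin{gather*}
 f(x) := g(p_{n',n}(x)), \qquad x \in K_{n'},
\end{gather*}
where $p_{n',n} \colon K_{n'} \to K_n$ is the iterate of the covering projections scaled from $p$. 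Consistency across different choices of $n'$ is automatic: since $p$ restricts to the identity on $K_0 \subset K_1$, each $p_m$ is the identity on $K_{m-1}$, so $p_{n'',n'}|_{K_{n'}} = \mathrm{id}$ whenever $n'' \geq n'$. The resulting $f$ is bounded by $\|g\|_\infty$ and continuous on $K_\infty$, the latter because each $x \in K_\infty$ admits a neighbourhood contained in some $K_{n'}$ (the junction points of $K_m$ with $K_{m+1}\setminus K_m$ are $w_0^{-m}v_1$ and $w_0^{-m}v_2$, which are disjoint from the corresponding junction points at the next level).

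The main obstacle is the $\cg_n$-invariance of $f$. The key observation is that the restriction of $p_{n',n}$ to any size-$2^n$ subcell $C \subset K_{n'}$ coincides with $\gamma_C$: decomposing $p_{n',n}$ as a product of the $p_m$'s and restricting to the successive subcells yields a product of identities and generators $R^{m-1}_{0,i}$, which is an element of $\cg_n$, and the uniqueness clause of the preceding proposition forces this element to equal $\gamma_C$. Once this is established, for any $\gamma \in \cg_n$ with $s(\gamma) = C_1$, $r(\gamma) = C_2$ both lying inside some $K_{n'}$, uniqueness again gives $\gamma_{C_2} \circ \gamma = \gamma_{C_1}$, whence
\begin{gather*}
 f(\gamma(x)) = g(\gamma_{C_2}(\gamma(x))) = g(\gamma_{C_1}(x)) = f(x),
\end{gather*}
and $\iota_n(f) = g$ because $p_{n,n} = \mathrm{id}_{K_n}$. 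Commutativity of the diagram is then immediate: for $f \in \widetilde\ca_n$ and $x \in K_{n+1}$, $p_n(x)$ equals $x$ when $x \in K_n$ and equals $R^n_{0,i}(x)$ when $x \in C^n_i$, and in both cases $\cg_n$-invariance gives $f(p_n(x)) = f(x) = \iota_{n+1}(f)(x)$. Since the $\iota_n$ intertwine the inclusion with $\a_{n+1,n}$ and are isometric $*$-isomorphisms, passing to the inductive limit in the category of $C^*$-algebras identifies $\ca_\infty$ with the norm closure of $\bigcup_n \widetilde\ca_n$ in $\cc_b(K_\infty)$, as required.
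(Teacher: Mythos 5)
Your proof is correct and follows the same underlying idea as the paper's (two-sentence) argument: the $\cg_n$-invariance determines $f$ on all of $K_\infty$ from $f|_{K_n}$ while imposing no constraint on $K_n$ itself, the extension being realized by pulling back along the iterated covering projections and identified with the groupoid action via the uniqueness clause of the preceding proposition. You simply supply in full the details (continuity at junction points, well-definedness, $\cg_n$-invariance of the extension) that the paper declares to "easily follow".
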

\begin{proof}
The request in the definition of $\widetilde\ca_n$ means that the value of $f$ in any point of $K_\infty$ is~determined by the value on $K_n$, while such request gives no restrictions on the values of $f$ on~$K_n$. The other assertions easily follow.
\end{proof}

As shown above, we may identify the algebra $\ca_n$, $0\leq n\leq \infty$, with its isomorphic copy~$\widetilde\ca_n$ in~$\cc_b(K_\infty)$, so that the embeddings $\a_{k,j}$ become inclusions.
Moreover, we may consider
the operator $\widetilde D_n$ on $\ell^2(E_\infty)$, with $E_\infty=\cup_{n\geq0}E_n$, given by $\widetilde D_n e=\length(e)^{-1}Fe$, if $\length(e)\leq2^n$, and $\widetilde D_n e=0$, if $\length(e) >2^n$, where $F$ is defined as in Definition~\ref{STnested}$(c)$.
Then the spectral triples $(\ca_n,\ch_n,D_n)$ are isomorphic to the spectral triples $(\widetilde\ca_n,\ch_n,\widetilde D_n)$, where $\cc_b(K_\infty)$ acts on~the space $\ell^2(E_\infty)$ through the representation $\rho$ given by $\rho(f)e=f(e^+)e$.
\begin{rem}
Because of the isomorphism above, from now on we shall remove the tildes and denote by $\ca_n$ the subalgebras of $\cc_b(K_\infty)$ and by $D_n$ the operators acting on $\ell^2(E_\infty)$.
\end{rem}

\section[The C*-algebra of geometric operators and a tracial weight on it]
{The $\boldsymbol{C^*}$-algebra of geometric operators\\ and a tracial weight on it}\label{sec5}

We now come to the action of local isometries on edges. We shall use the following notation, where in the table below to any subset of edges listed on the left we indicate on the right the projection on the closed subspace spanned by the same subset:
 \begin{table}[h!]\centering
 \caption{Edges and projections.}\vspace{1ex}
 \label{tab:table1}
{\renewcommand{\arraystretch}{1.4}%
\begin{tabular}{l|l}
 \hline
 \multicolumn{1}{c|}{Subsets of $E_\infty$} & \multicolumn{1}{c}{Projections} \\[.5ex]
 \hline
$E_n=\{e\subset K_n\}$, $n\geq0$ &\qquad $P_n$
\\
$E^{k,p}_n=\big\{e\in E_n\colon 2^k\leq\length(e)\leq2^p\big\}$, for $k\leq p\leq n$ &\qquad $P_n^{k,p}$
\\
$E^k_n=E^{k,k}_n=\big\{e\in E_n\colon \length(e)=2^k\big\}$, for $k\leq n$ &\qquad $P^k_n$
\\
$E^{k,p}=\cup_n E^{k,p}_n=\big\{e\in E_\infty\colon 2^k\leq\length(e)\leq2^p\big\}$ &\qquad $P^{k,p}$
\\
$E^k=E^{k,k}=\big\{e\in E_\infty\colon \length(e)=2^k\big\}$ &\qquad $P^{k}$
\\
$E_C = \{e\in E_\infty\colon e\subset C\}$, $C$ being a cell &\qquad $P_C$
 \end{tabular}}
\end{table}

Let us note that any local isometry $\g\in\cg$, $\g\colon s(\g) \to r(\g)$, gives rise to a partial isometry~$V_\g$ defined as
\begin{gather*}
V_\g e=\begin{cases}
\g(e), & e\subset s(\g),\\
0, & \mathrm{elsewhere}.
\end{cases}
\end{gather*}
In particular, if $C$ is a cell, and $\g={\rm id}_C$, $V_\g=P_C$.
We then consider the subalgebras $\cb_n$ of~$B(\ell^2(E_\infty))$,
\begin{gather*}
\cb_n=\{V_\g\colon\g\in\cg_m,\,m\geq n\}',\qquad \Balg = \bigcup_n\cb_n,\qquad \cb_\infty=\overline{\Balg},
\end{gather*}
{\sloppy and note that the elements of $\cb_n$ commute with the projections $P_C$, for all cells $C$ s.t.\ \mbox{$\size(C)\geq 2^n$}.
By definition, the sequence $\cb_n$ is increasing, therefore, since the $\cb_n$'s are von Neumann algebras, $\cb_\infty$ is a $C^*$-algebra. Let us observe that, $\forall\, n\geq0$, $\rho(\ca_n)\subset\cb_n$.

}

\begin{dfn}
The elements of the $C^*$-algebra $\cb_\infty$ are called geometric operators.
\end{dfn}

Now consider the hereditary positive cone
\begin{gather*}
\cai_0^+ = \big\{T\in \Balg^+\colon\exists\, c_T\in\br \mathrm{\ such\ that\ }\tr (P_{m}T) \leq c_T \vol(K_m), \, \forall\, m \geq 0 \big\}.
\end{gather*}

\begin{lem}
For any $T\in\cai_0^+$, the sequence
$
\frac{\tr (P_{m} T) }{\vol(K_m)}$ is eventually increasing, hence convergent. In particular
\begin{gather}\label{evConst}
\tr(P_{p}^{p} T)=0\qquad \forall\, p>m\Rightarrow \t_0(T)=\frac{\tr (P_{m} T) }{\vol(K_{m})}.
\end{gather}
\end{lem}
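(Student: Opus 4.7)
The plan is to obtain an explicit recursion for $\tr(P_{m+1}T)$ in terms of $\tr(P_m T)$ via the three-fold self-similar splitting $K_{m+1} = K_m \cup C_1 \cup C_2$, where $C_i := w_0^{-m-1}w_i K$ ($i=1,2$) are the two new cells of size $2^m$, and then to use the fact that $T$ is a geometric operator to identify the traces of the three ``local'' projections $P_m, P_{C_1}, P_{C_2}$.

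Geometrically, every oriented edge of $K_{m+1}$ is either one of the six top-level edges of length $2^{m+1}$ (whose projection is $P_{m+1}^{m+1}$) or is strictly contained in exactly one of the three subcells, which pairwise meet only at vertices. This yields the orthogonal decomposition
\[
P_{m+1} = P_m + P_{C_1} + P_{C_2} + P_{m+1}^{m+1}
\]
on $\ell^2(E_\infty)$, together with the measure identity $\vol(K_{m+1}) = 3\vol(K_m)$. Since $T \in \Balg^+$, pick $n_0$ with $T \in \cb_{n_0}$ and take $m \geq n_0$. By the previous proposition there is a unique $\gamma_i \in \cg_m$ with $s(\gamma_i) = K_m$, $r(\gamma_i) = C_i$; the associated partial isometry $V_{\gamma_i}$ satisfies $V_{\gamma_i}^* V_{\gamma_i} = P_m$, $V_{\gamma_i} V_{\gamma_i}^* = P_{C_i}$, and commutes with $T$ by the very definition of $\cb_{n_0}$. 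Cyclicity of the trace then gives
\[
\tr(P_{C_i} T) = \tr(V_{\gamma_i} V_{\gamma_i}^* T) = \tr(V_{\gamma_i}^* T V_{\gamma_i}) = \tr(V_{\gamma_i}^* V_{\gamma_i} T) = \tr(P_m T),
\]
whence $\tr(P_{m+1}T) = 3\tr(P_m T) + \tr(P_{m+1}^{m+1} T)$ for $m \geq n_0$. Dividing by $\vol(K_{m+1}) = 3\vol(K_m)$ and using $T\geq 0$ (hence $\tr(P_{m+1}^{m+1}T)\geq 0$), the quotient $\tr(P_m T)/\vol(K_m)$ is monotonically increasing for $m \geq n_0$; combined with the uniform bound $\leq c_T$ built into the definition of $\cai_0^+$, it converges.

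For the ``in particular'' clause, assuming $m \geq n_0$ and $\tr(P_p^p T) = 0$ for every $p > m$ kills every further increment, so the quotient is constant from step $m$ onwards and its limit $\tau_0(T)$ equals $\tr(P_m T)/\vol(K_m)$. The main obstacle is the middle step: one must correctly identify the edge-set decomposition of $K_{m+1}$ (noting that the six top-level edges are \emph{not} inherited from any subcell, so that their projection $P_{m+1}^{m+1}$ appears as a separate summand) and then translate the abstract commutation $T \in \cb_{n_0}$ into the concrete invariance $\tr(P_C T) = \tr(P_m T)$ for every cell $C$ of size $2^m$.
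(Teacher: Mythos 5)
Your proof is correct and follows essentially the same route as the paper's: the paper likewise splits the edges of $K_{m+1}$ into those lying in the three subcells plus the six new edges of length $2^{m+1}$, obtains the recursion $\tr(P_{m+1}T)=3\tr(P_mT)+\tr\big(P_{m+1}^{m+1}T\big)$ for $m\geq n$ with $T\in\cb_n^+$, and divides by $\vol(K_{m+1})=3\vol(K_m)$. The only cosmetic difference is that you justify $\tr(P_{C_i}T)=\tr(P_mT)$ via the partial isometries $V_{\gamma_i}$ and cyclicity of the trace, whereas the paper performs the same change of variables directly inside the sum $\sum_{e\subset C^m_i}(e,Te)$.
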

\begin{proof}
Let $T\in\cb_n^+$. Then we have, for $m\geq n$,
\begin{gather*}
\tr (P_{{m+1}} T) = \!\!\sum_{e\subset K_{m+1}} (e,Te) = \!\!\sum_{i=0,1,2} \sum_{e\in C^m_i }(e,Te) + \!\sum_{e\in E_{m+1}^{m+1}}\! (e,Te)
= 3\tr (P_{{m}} T) + \tr(P_{m+1}^{m+1} T),
\end{gather*}
hence
\begin{gather*}
\frac{\tr (P_{m+1} T) }{\vol(K_{m+1})}=\frac{\tr (P_{m} T) }{\vol(K_m)}+\frac{\tr(P_{m+1}^{m+1} T)}{\vol(K_{m+1})},
\end{gather*}
from which the thesis follows.
\end{proof}

We then define the weight $\t_0$ on $\cb_\infty^+$ as follows:
\begin{gather*}
\t_0(T) =
\begin{cases}
\displaystyle{\lim_{m\to\infty}\frac{\tr (P_{m} T)}{\vol(K_m)}},& T\in{\cai_0^+},\\
0,& \mathrm{elsewhere}.
\end{cases}
\end{gather*}

The next step is to regularize the weight $\t_0$ in order to obtain a semicontinuous semifinite tracial weight $\t$ on $\cb_\infty$.

\begin{lem} \label{hereditary}
For any $T\in\cai_0^+$, $A\in\Balg$, it holds $ATA^* \in \cai_0^+$, and $\t_0(ATA^*) \leq \|A\|^2 \t_0(T)$.
\end{lem}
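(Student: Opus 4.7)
\emph{Plan.} The strategy is to establish, for all sufficiently large $m$, the pointwise inequality $\tr(P_m ATA^*)\leq\|A\|^2\tr(P_m T)$, then divide by $\vol(K_m)$ and pass to the limit $m\to\infty$. Choose $n$ with $A\in\cb_n$ and $p$ with $T\in\cb_p^+$, and set $q:=\max(n,p)$. Since $K_m$ is itself a cell of $K_\infty$ of size $2^m$ (the identity ${\rm id}_{K_m}$ belongs to $\cg_m$ by the previous proposition, and $V_{{\rm id}_{K_m}}=P_m$), the remark recorded after the definition of $\cb_n$ implies that every element of $\cb_n$ commutes with $P_m$ for all $m\geq n$. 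In particular $A$, $T$, $A^*$, $A^*A$ and (by the Borel functional calculus, since $\cb_p$ is a von Neumann algebra) $T^{1/2}$ all commute with $P_m$ whenever $m\geq q$.

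For such $m$, cyclicity of $\tr$ combined with $P_m A=A P_m$ gives
\[
\tr(P_m ATA^*)=\tr(A^*P_m AT)=\tr(P_m A^*AT).
\]
Setting $B:=\|A\|^2 I-A^*A\geq 0$, we rewrite
\[
\|A\|^2\tr(P_m T)-\tr(P_m ATA^*)=\tr(B\cdot TP_m).
\]
The right-hand side is $\geq 0$ because $TP_m=(P_m T^{1/2})^*(P_m T^{1/2})\geq 0$ and $B\geq 0$, so it is the trace of a product of two positive operators. This yields $\tr(P_m ATA^*)\leq\|A\|^2\tr(P_m T)\leq\|A\|^2 c_T\,\vol(K_m)$ for every $m\geq q$.

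It remains to absorb the finitely many ``small'' indices $m<q$ so as to conclude that $ATA^*\in\cai_0^+$: using $P_m\leq P_q$ and the normalization $\vol(K_m)\geq\vol(K_0)=1$, one gets $\tr(P_m ATA^*)\leq\tr(P_q ATA^*)\leq\|A\|^2 c_T\,\vol(K_q)$, so $c_{ATA^*}:=\|A\|^2 c_T\,\vol(K_q)$ is a uniform constant. The preceding lemma then guarantees existence of $\t_0(ATA^*)=\lim_{m\to\infty}\tr(P_m ATA^*)/\vol(K_m)$, and passing to this limit in the large-$m$ inequality yields $\t_0(ATA^*)\leq\|A\|^2\t_0(T)$. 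The only delicate point is the identification $P_m=P_{K_m}$, which lets the commutation argument go through; once that is in hand the proof is a clean combination of trace cyclicity and the standard operator bound $A^*A\leq\|A\|^2 I$.
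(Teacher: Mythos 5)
Your proof is correct and follows essentially the same route as the paper's: commute $A$ with $P_m$ for large $m$ (since $P_m=P_{K_m}$ and $A\in\cb_n$), use cyclicity of $\tr$ and the bound $A^*A\leq\|A\|^2 I$ to get $\tr(P_mATA^*)\leq\|A\|^2\tr(P_mT)$, then pass to the limit. The only difference is that you spell out the commutation with $P_{K_m}$ and the adjustment of the constant for the finitely many small indices $m<q$, details the paper leaves implicit.
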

\begin{proof}
Let $A\in\cb_n$. Then, for any $m>n$, we have
\begin{gather*}
\tr(P_mATA^*) = \tr(A^*AP_mT) \leq \| A^*A \| \tr(P_mT) \leq \|A\|^2 c_T \vol(K_m),
\end{gather*}
and the thesis follows.
\end{proof}

\begin{prop} \label{def.Qfi}
For all $p\in\bn$, recall that $P^{-p,\infty}$ is the orthogonal projection onto the closed vector space generated by $\big\{ e\in\ell^2(E_\infty)\colon \length(e) \geq 2^{-p} \big\}$, and let $\f_p(T) := \t_0(P^{-p,\infty}TP^{-p,\infty})$, $\forall\, T\in\cb_\infty^+$. Then $P^{-p,\infty}\in\cb_0$, $\f_p$ is a positive linear functional, and $\f_p(T) \leq \f_{p+1}(T) \leq \t_0(T)$, $\forall\, T\in\cb_\infty^+$.
\end{prop}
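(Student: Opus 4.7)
The plan is to handle the three claims sequentially, using the finite-rank count $\tr(P_mP^{-p,\infty})\le c_p\vol(K_m)$ as the main quantitative input.

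First, for $P^{-p,\infty}\in\cb_0$: every element $\g\in\cg$ is a local isometry between cells of equal size, so the partial isometry $V_\g$ maps each edge $e\subset s(\g)$ to an edge $\g(e)$ of the same length (and other basis vectors to $0$). Consequently $V_\g$ preserves the closed span of edges of length at least $2^{-p}$, i.e.\ commutes with $P^{-p,\infty}$, for every $\g\in\cg_m$, $m\ge 0$. Hence $P^{-p,\infty}\in\cb_0$.

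Second, to establish that $\f_p$ is a positive linear functional, I first bound the rank of the finite-rank projection $P_mP^{-p,\infty}$. Since $K_m$ is a gasket of size $2^m$, it contains $6\cdot 3^{m-j}$ oriented edges of length $2^j$ for each $j\le m$, and $\vol(K_m)=3^m$; summing over $j\in\{-p,\ldots,m\}$ gives
\[\tr(P_mP^{-p,\infty})=\sum_{j=-p}^m 6\cdot 3^{m-j}\le 3^{p+2}\vol(K_m).\]
For $T\in\Balg^+$, the operator $P^{-p,\infty}TP^{-p,\infty}$ lies in $\Balg^+$ because $P^{-p,\infty}\in\cb_0\subset\Balg$, and
\[\tr\bigl(P_m P^{-p,\infty}TP^{-p,\infty}\bigr)\le\|T\|\tr(P_mP^{-p,\infty})\le 3^{p+2}\|T\|\vol(K_m),\]
so $P^{-p,\infty}TP^{-p,\infty}\in\cai_0^+$ and $\f_p(T)=\t_0(P^{-p,\infty}TP^{-p,\infty})$ is well defined and satisfies $0\le\f_p(T)\le 3^{p+2}\|T\|$. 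Additivity and positive-homogeneity are inherited from the trace (through the limit defining $\t_0$, whose existence is guaranteed by the preceding lemma), and the uniform bound lets me extend $\f_p$ by continuity to a bounded positive linear functional on all of $\cb_\infty$.

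Third, for $\f_p\le\f_{p+1}\le\t_0$ on $\cb_\infty^+$: the three projections $P^{-p,\infty}$, $P^{-(p+1),\infty}$, and $P_m$ pairwise commute (being diagonal in the edge basis). Cyclicity of the trace gives, for $T\in\cb_\infty^+$,
\[\tr\bigl(P_mP^{-(p+1),\infty}TP^{-(p+1),\infty}\bigr)-\tr\bigl(P_mP^{-p,\infty}TP^{-p,\infty}\bigr)=\tr\bigl(P_m(P^{-(p+1),\infty}-P^{-p,\infty})T\bigr)\ge 0,\]
the final inequality because $P_m(P^{-(p+1),\infty}-P^{-p,\infty})$ is a positive operator and $\tr(QT)\ge 0$ whenever $Q,T\ge 0$. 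Dividing by $\vol(K_m)$ and letting $m\to\infty$ gives $\f_p(T)\le\f_{p+1}(T)$. The same computation with $P^{-(p+1),\infty}$ replaced by $I$ gives $\f_{p+1}(T)\le\t_0(T)$ when $T\in\cai_0^+$; outside $\cai_0^+$, $\t_0(T)$ is understood as the upper limit $+\infty$, and the inequality is vacuous.

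I expect no serious obstacle: the only non-formal step is the edge count $\tr(P_mP^{-p,\infty})\le 3^{p+2}\vol(K_m)$, and all remaining claims reduce to cyclicity of the trace between commuting projections.
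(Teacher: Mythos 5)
Your proof is correct and follows essentially the same route as the paper's: the edge count $\tr(P^j_m)=6\cdot 3^{m-j}$ giving the bound $\tr(P_mP^{-p,\infty})\leq 3^{p+2}\vol(K_m)$ (hence $\f_p(I)=3^{p+2}$ and extension by linearity), together with the difference-of-projections computation for monotonicity. You are in fact slightly more explicit than the paper on two points it leaves implicit --- why the $V_\g$ commute with $P^{-p,\infty}$, and how to read $\f_p(T)\leq\t_0(T)$ off Lemma~\ref{hereditary} outside $\cai_0^+$ --- but the substance is identical.
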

\begin{proof}
We first observe that
\begin{gather}\label{Pjn}
\tr(P^j_n)=\#\big\{e\in K_n\colon \length(e)=2^j\big\}=6 \cdot 3^{n-j},\qquad j\leq n.
\end{gather}
Then it is easy to verify that $P^{-p,\infty}\in\cb_0$. Since
\begin{gather}\label{P-pinfty}
\f_p(I)
= \t_0(P^{-p,\infty})
= \lim_{n\to\infty} \frac{\tr P^{-p,n}_n}{\mu_d(K_n)}
= \lim_{n\to\infty} 3^{-n} \sum_{j=-p}^n \tr(P^j_n)
=\sum_{j=-p}^\infty6\cdot 3^{-j}
= 3^{p+2},
\end{gather}
$\f_p$ extends by linearity to a positive functional on $\cb_\infty$. Moreover, by Lemma~\ref{hereditary}, $\f_p(T) \leq \t_0(T)$, $\forall\, T\in\cb_\infty^+$. Finally, since $P^{-p,\infty}P_n=P_nP^{-p,\infty}=P^{-\infty,n}_n$, $\forall\, n\in\bn$, we get, for all $T\in\cb_\infty^+$,
\begin{align*}
\f_{p+1}(T) - \f_p(T)
& = \t_0(P^{-(p+1),\infty}TP^{-(+1)p,\infty}) - \t_0(P^{-p,\infty}TP^{-p,\infty}) \\
&= \lim_{n\to\infty} \frac{\tr((P^{-(p+1),n)}_n -P^{-p,n}_n)T)}{\mu_d(K_n)}
=\lim_{n\to\infty} \frac{\tr(P^{-(p+1))}_n T)}{\mu_d(K_n)}
\geq 0.
\tag*{\qed}
\end{align*}
\renewcommand{\qed}{}
\end{proof}

\begin{prop} \label{def.tau}
Let $\t(T) := \lim\limits_{p\to\infty} \f_p(T)$, $\forall\, T\in\cb_\infty^+$. Then	
\begin{itemize}\itemsep=0pt
\item[$(i)$] $\t$ is a lower semicontinuous weight on $\cb_\infty$,
\item[$(ii)$] $\t(T) = \t_0(T)$, $\forall\, T\in\cai_0^+$.
\end{itemize}
\end{prop}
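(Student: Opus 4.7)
My plan is to dispatch (i) quickly from Proposition \ref{def.Qfi} and then focus on (ii), where the nontrivial direction is $\t(T)\geq\t_0(T)$.

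For (i), Proposition \ref{def.Qfi} tells me that $p\mapsto\f_p(T)$ is non-decreasing on $\cb_\infty^+$, so the limit defining $\t(T)$ exists in $[0,+\infty]$. Positive homogeneity and additivity on $\cb_\infty^+$ are inherited from the $\f_p$'s by passing to the limit. For lower semicontinuity, the key remark is that each $\f_p$ is a bounded positive linear functional on $\cb_\infty$: indeed by \eqref{P-pinfty} one has $\f_p(I)=3^{p+2}<\infty$, and a positive linear functional on a unital $C^*$-algebra is bounded with norm $\f_p(I)$, hence norm-continuous. Thus $\t=\sup_p\f_p$ is a supremum of continuous positive functionals, and so is lower semicontinuous on $\cb_\infty^+$.

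For (ii), the inequality $\t(T)\leq\t_0(T)$ for $T\in\cai_0^+$ is immediate from Lemma~\ref{hereditary} applied to $A=P^{-p,\infty}$ (which has norm one): $\f_p(T)=\t_0(P^{-p,\infty}TP^{-p,\infty})\leq\t_0(T)$, and passing to the limit in $p$ gives $\t(T)\leq\t_0(T)$. The reverse inequality is the part I expect to require the most care. The idea is to iterate the identity established in the proof of the preceding lemma, namely
\begin{gather*}
\frac{\tr(P_{m+1}T)}{\vol(K_{m+1})}=\frac{\tr(P_mT)}{\vol(K_m)}+\frac{\tr(P^{m+1}_{m+1}T)}{\vol(K_{m+1})},
\end{gather*}
which, for $T\in\cb_n^+\cap\cai_0^+$, telescopes to
\begin{gather*}
\t_0(T)=\frac{\tr(P_nT)}{\vol(K_n)}+\sum_{k=n+1}^\infty\frac{\tr(P^k_kT)}{\vol(K_k)}.
\end{gather*}
Applying the same expansion to $T_p:=P^{-p,\infty}TP^{-p,\infty}\in\cb_n^+\cap\cai_0^+$ and using that $P^{-p,\infty}$ commutes with each $P_m$ and each $P^k_k$ (all three are diagonal in the edge basis), one gets
\begin{gather*}
\t_0(T_p)=\frac{\tr(P^{-p,n}_nT)}{\vol(K_n)}+\sum_{k=n+1}^\infty\frac{\tr(P^k_kT)}{\vol(K_k)},
\end{gather*}
the second sum being \emph{unchanged} for $p\geq -n$ because then every edge of length $2^k$ with $k\geq n+1$ lies in the range of $P^{-p,\infty}$.

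The final step is to let $p\to\infty$: since $P^{-p,n}_n\nearrow P_n$ strongly and $T\geq 0$, normality of the trace on the (finite-dimensional) range of $P_n$ yields $\tr(P^{-p,n}_nT)\to\tr(P_nT)$, so $\f_p(T)=\t_0(T_p)\to\t_0(T)$, giving $\t(T)\geq\t_0(T)$ and completing the proof. The main obstacle is precisely this exchange of limits in the reverse inequality, which is why it is useful to reduce everything to the telescoping formula where the inner sum stabilises in $p$ and only the finite-dimensional head requires a monotone-convergence argument.
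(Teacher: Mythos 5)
Your proof is correct, and part (i) together with the inequality $\t(T)\le\t_0(T)$ on $\cai_0^+$ coincide with the paper's own argument. Where you genuinely diverge is in the reverse inequality $\t(T)\ge\t_0(T)$. The paper first establishes the level-wise stability $\tr\big(P^j_mT\big)/\vol(K_m)=\tr\big(P^j_nT\big)/\vol(K_n)$ for $T\in\cb_n^+$, $j\le n\le m$ (proved with the partial isometries $V_{R^m_i}$), and then runs a double approximation: pick $r$ with $\tr(P_rT)/\vol(K_r)>\t_0(T)-\eps$, pick $p$ so that the levels below $-p$ contribute less than $\eps$ at stage $r$, and let $s\to\infty$. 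You instead telescope the identity from the preceding lemma into the exact expansion $\t_0(T)=\tr(P_nT)/\vol(K_n)+\sum_{k>n}\tr\big(P^k_kT\big)/\vol(K_k)$, note that compression by $P^{-p,\infty}$ leaves the tail untouched (all edges there have length $\ge 2$), and obtain the closed formula $\f_p(T)=\t_0(T)-\big(\tr(P_nT)-\tr\big(P^{-p,n}_nT\big)\big)/\vol(K_n)$, from which the limit is immediate. Both routes rest on the same self-similarity computation (the trace of a fixed-level slab triples in passing from $K_m$ to $K_{m+1}$), but your exact formula bypasses the auxiliary identity and the $\eps$-chasing, which is a genuine, and arguably cleaner, simplification. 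One small correction: the range of $P_n$ is \emph{not} finite-dimensional --- $E_n$ contains edges of every length $2^j$ with $j\le n$, hence infinitely many --- so the convergence $\tr\big(P^{-p,n}_nT\big)\to\tr(P_nT)$ must be justified by normality of $\tr$ on $\cb\big(\ell^2(E_\infty)\big)$ applied to $T^{1/2}P^{-p,n}_nT^{1/2}\nearrow T^{1/2}P_nT^{1/2}$; this is exactly the monotone-convergence argument you invoke, so the conclusion stands, but the parenthetical ``finite-dimensional'' should be dropped.
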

\begin{proof}
$(i)$ Let $T\in\cb_\infty^+$. Since $\{ \f_p(T) \}_{p\in\bn}$ is an increasing sequence, there exists $\lim\limits_{p\to\infty} \f_p(T) = \sup\limits_{p\in\bn} \f_p(T)$. Then $\t$ is a weight on $\cb_\infty^+$. Since $\f_p$ is continuous, $\t$ is lower semicontinuous.

$(ii)$ Let us prove that, $\forall\, T\in\cb_n^+$,
\begin{gather} \label{uguaglianza}
\frac{\tr(P^j_mT)}{\mu_d(K_m)} = \frac{\tr(P^j_nT)}{\mu_d(K_n)}, \qquad j\leq n \leq m.
\end{gather}
Indeed,
\begin{align*}
\tr\big(P^j_{m+1}T\big) & = \sum_{\substack{e \subset K_{m+1} \\ \length(e)=2^j}} \ps{ e }{ Te } = \sum_{i=0}^2 \sum_{\substack{e \subset C^m_i \\ \length(e)=2^j}} \ps{ e }{ Te } = \sum_{i=0}^2 \sum_{\substack{e \subset K_m \\ \length(e)=2^j}} \ps{ V_{R^m_i}e }{ TV_{R^m_i}e } \\
& = \sum_{i=0}^2 \sum_{\substack{e \subset K_m \\ \length(e)=2^j}} \ps{ e }{ Te } = 3 \tr\big(P^j_mT\big),
\end{align*}
from which~\eqref{uguaglianza} follows. Let us now prove that
\begin{gather} \label{approx}
\t(T) = \sup_{p\in\bn} \f_p(T) = \t_0(T), \qquad T\in\cai_0^+ .
\end{gather}
Let $T\in\cb_n^+ \cap \cai_0^+$, and $\eps>0$. From the definition of $\t_0(T)$, there exists $r\in\bn$, $r>n$, such that $\frac{\tr(P_rT)}{\mu_d(K_r)} > \t_0(T)-\eps$. Since $\frac{\tr(P_rT)}{\mu_d(K_r)} = \sum_{j=-\infty}^r \frac{\tr(P^j_rT)}{\mu_d(K_r)}$, there exists $p\in\bn$ such that $\sum_{j=-p}^r \frac{\tr(P^j_rT)}{\mu_d(K_r)} > \frac{\tr(P_rT)}{\mu_d(K_r)} -\eps > \t_0(T)-2\eps$. Then, for any $s\in\bn$, $s>r$, we have
\begin{align*}
\frac{\tr(P_sP^{-p,\infty}TP^{-p,\infty}P_s)}{\mu_d(K_s)} & = \sum_{j=-p}^s \frac{\tr(P^j_sT)}{\mu_d(K_s)} = \sum_{j=-p}^r \frac{\tr(P^j_sT)}{\mu_d(K_s)} + \sum_{j=r+1}^s \frac{\tr(P^j_sT)}{\mu_d(K_s)} \\
& \stackrel{\eqref{uguaglianza}}{=} \sum_{j=-p}^r \frac{\tr(P^j_rT)}{\mu_d(K_r)} + \sum_{j=r+1}^s \frac{\tr(P^j_sT)}{\mu_d(K_s)} > \t_0(T)-2\eps,
\end{align*}
and, passing to the limit for $s\to\infty$, we get
\begin{gather*}
\f_p(T) = \t_0(P^{-p,\infty}TP^{-p,\infty}) = \lim_{s\to\infty} \frac{\tr(P_sP^{-p,\infty}TP^{-p,\infty}P_s)}{\mu_d(K_s)} \geq \t_0(T)-\eps,
\end{gather*}
and equation~\eqref{approx} follows.
\end{proof}

We want to prove that $\t$ is a tracial weight.

\begin{dfn}
An operator $U\in B\big(\ell^2(E_\infty)\big)$ is called $\d$-unitary, $\d>0$, if $\|U^*U-1\|<\d$, and $\|UU^*-1\|<\d$.
\end{dfn}

Let us denote with $\cu_\d$ the set of $\d$-unitaries in $\Balg$ and observe that, if $\d<1$, $\cu_\d$ consists of invertible operators, and $U\in\cu_\d$ implies $U^{-1}\in\cu_{\d/(1-\d)}$.

\begin{prop}
The weight $\t_0$ is $\eps$-invariant for $\d$-unitaries in $\Balg$, namely, for any $\eps\in(0,1)$, there is $\d>0$ s.t., for any $U\in\cu_\d$, and $T\in \cb^+_\infty$,
\begin{gather*}
(1-\eps)\t_0(T) \leq \t_0(UTU^*) \leq (1+\eps)\t_0(T) .
\end{gather*}
\end{prop}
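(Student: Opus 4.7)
The strategy is to apply Lemma~\ref{hereditary} twice---directly to $U$ for the upper bound and to $U^{-1}$ for the lower bound---after ensuring that $U$ is invertible, its inverse lies in $\Balg$, and both $\|U\|$ and $\|U^{-1}\|$ are controlled by~$\d$. Take $\d=\eps$. If $\d<1$, the condition $\|U^*U-1\|<\d<1$ makes $U^*U$ invertible by Neumann series, with $\|(U^*U)^{-1}\|\leq(1-\d)^{-1}$, and analogously for $UU^*$; hence $U$ is invertible. Then $U^{-1}(U^{-1})^*=(U^*U)^{-1}$ gives $\|U^{-1}\|^2\leq(1-\d)^{-1}$, while $\|U\|^2=\|U^*U\|\leq 1+\d$. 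Since $U\in\Balg=\bigcup_n\cb_n$ lies in some $\cb_n$, which is a von Neumann algebra (being defined as a commutant), one also has $U^{-1}\in\cb_n\subset\Balg$.

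For $T\in\cai_0^+$, Lemma~\ref{hereditary} directly gives $UTU^*\in\cai_0^+$ and $\t_0(UTU^*)\leq\|U\|^2\t_0(T)\leq(1+\d)\t_0(T)$, which is the upper bound. For the lower bound, I would use the identity $T=U^{-1}(UTU^*)(U^{-1})^*$ (since $U^{-1}U=1$ and $U^*(U^{-1})^*=1$) and apply Lemma~\ref{hereditary} a second time, with $A=U^{-1}\in\Balg$ and the positive element $UTU^*\in\cai_0^+$: this yields $\t_0(T)\leq\|U^{-1}\|^2\t_0(UTU^*)\leq(1-\d)^{-1}\t_0(UTU^*)$, i.e.\ $(1-\d)\t_0(T)\leq\t_0(UTU^*)$. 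With $\d=\eps$ both inequalities asserted in the proposition follow.

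Finally, the case $T\in\cb_\infty^+\setminus\cai_0^+$ must be handled separately, since the hereditary lemma only produces information when $T\in\cai_0^+$. Here $\t_0(T)=0$ by the definition of $\t_0$; and $UTU^*\notin\cai_0^+$, for if it were, applying Lemma~\ref{hereditary} to $U^{-1}\in\Balg$ and the element $UTU^*$ would force $T=U^{-1}(UTU^*)(U^{-1})^*\in\cai_0^+$, contrary to hypothesis. Hence $\t_0(UTU^*)=0$ as well, and the inequalities are trivial. The only mildly delicate point in the whole argument is the passage from $U$ to $U^{-1}$ inside $\Balg$: it is essential that each $\cb_n$ is a von Neumann algebra rather than merely a $C^*$-algebra, so that $U^{-1}$ belongs to $\Balg$ and Lemma~\ref{hereditary} may be re-applied to it.
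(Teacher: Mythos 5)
Your proof is correct and follows essentially the same route as the paper's: reduce to the estimate $\t_0(ATA^*)\leq\|A\|^2\t_0(T)$ of Lemma~\ref{hereditary}, apply it once with $A=U$ and once with $A=U^{-1}$ (using $\|U\|^2\leq 1+\d$, $\|U^{-1}\|^2\leq(1-\d)^{-1}$, and the fact that $U^{-1}$ stays in $\Balg$ because each $\cb_n$ is a von Neumann algebra), and take $\d=\eps$. Your explicit treatment of the case $T\notin\cai_0^+$, via the equivalence $T\in\cai_0^+\Leftrightarrow UTU^*\in\cai_0^+$, is exactly the observation the paper makes at the start of its proof.
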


\begin{proof}
We first observe that, if $\d\in(0,1)$ and $U\in\cu_\d$, $T\in\cai_0^+\Leftrightarrow UTU^*\in\cai_0^+$. Indeed, choose $n$ such that $U,T\in\cb_n$. Then
$\tr (P_{n} UTU^*) = \tr (U^* UP_{n}TP_{n}) \leq \|U^*U\| \tr (P_{n}T) \leq (1+\d) c_T\vol(K_n)$, $\forall\, n\in\bn$, so that $UTU^* \in \cai_0^+$. Moreover,
\begin{gather*}
\t_0(UTU^*) = \lim_{n\to\infty} \frac{\tr(P_nUTU^*)}{\vol(K_n)} \leq \| U^*U \| \lim_{n\to\infty} \frac{\tr(P_nT)}{\vol(K_n)} = \| U^*U \| \t_0(T) < (1+\d) \t_0(T).
\end{gather*}
Conversely, $UTU^*\in\cai_0^+$, and $U^{-1}\in \cu_{\d/(1-d)} \implies T \in\cai_0^+$. Moreover,
\begin{gather*}
\t_0(T) \leq \big\| \big(U^{-1}\big)^*U^{-1} \big\| \t_0(UTU^*) < \frac1{1-\d} \t_0(UTU^*).
\end{gather*}
The result follows by the choice $\d=\eps$.
\end{proof}

\begin{thm}
The lower semicontinuous weight $\t$ in Proposition~$\ref{def.tau}$ is a trace on $\cb_\infty$, that is, setting $\cj^+:= \{A\in\cb_\infty^+\colon \t(A)<\infty\}$, and extending $\t$ to the vector space $\cj$ generated by $\cj^+$, we get
\begin{itemize}\itemsep=0pt
\item[$(i)$] $\cj$ is an ideal in $\cb_\infty$,
\item[$(ii)$] $\t(AB)=\t(BA)$, for any $A\in\cj$, $B\in\cb_\infty$.
\end{itemize}
\end{thm}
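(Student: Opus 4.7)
The strategy is to exploit the $\eps$-invariance of $\tau_0$ under $\d$-unitaries (preceding Proposition), upgrade it to exact unitary invariance of $\tau$, and then extract both the ideal property and cyclicity via the standard decomposition of a $C^*$-algebra element into unitaries.

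First, I would show that $\tau_0$ is \emph{exactly} invariant under unitaries $U\in\Balg$: since a unitary is a $\d$-unitary for every $\d>0$, the inequality $(1-\eps)\tau_0(T)\leq\tau_0(UTU^*)\leq(1+\eps)\tau_0(T)$ from the previous proposition, valid for every $\eps>0$, forces equality. To transfer this to $\tau$, I would approximate a unitary $U\in\cb_\infty=\overline{\Balg}$ by $U_k\in\Balg$; each $U_k$ is a $\d_k$-unitary with $\d_k\to0$, so combining the $\eps$-invariance (applied after rearranging $\varphi_p(U_kTU_k^*)$ via the cyclic property of $\tau_0$ on $\Balg$) with the lower semicontinuity of $\tau$ yields $\tau(UTU^*)=\tau(T)$ for every $T\in\cb_\infty^+$ and every unitary $U\in\cb_\infty$.

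Given unitary invariance of $\tau$, I would decompose any $B\in\cb_\infty$ as a linear combination $B=\sum_{j=1}^4\l_j U_j$ of four unitaries in $\cb_\infty$. For $A\in\cj^+$, expanding $BAB^*=\sum_{j,k}\l_j\bar\l_k U_jAU_k^*$, the diagonal terms satisfy $\tau(U_jAU_j^*)=\tau(A)$ by invariance, while the off-diagonal ones are controlled by Cauchy--Schwarz, $|\tau(U_jAU_k^*)|\leq \tau(U_jAU_j^*)^{1/2}\tau(U_kAU_k^*)^{1/2}=\tau(A)$. Hence $\tau(BAB^*)<\infty$, proving $(i)$. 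For $(ii)$, substituting $T\mapsto AU$ into the invariance identity $\tau(UTU^{-1})=\tau(T)$ gives $\tau(UA)=\tau(AU)$ for every unitary $U$ and every $A\in\cj$; extending linearly in $B$ then yields $\tau(AB)=\tau(BA)$ for all $B\in\cb_\infty$.

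The main delicate point is the transfer of unitary invariance from $\tau_0$ to $\tau$: the projection $P^{-p,\infty}$ does not commute with generic elements of $\cb_\infty$, so one cannot simply move $U$ past $P^{-p,\infty}$ inside $\varphi_p(UTU^*)$. The required rearrangement relies on the cyclic property of $\tau_0$ on $\Balg$, which I would derive from the fact that $P_m = V_{\mathrm{id}_{K_m}}$ belongs to $\cg_m$ and therefore commutes with every element of $\cb_n$ whenever $m\geq n$; combined with cyclicity of the ordinary operator trace this gives $\tr(P_m XY)=\tr(P_m YX)$ for $X,Y\in\cb_n$ and $m\geq n$, whence $\tau_0(XY)=\tau_0(YX)$ on its natural domain. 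The subsequent passage to the limits $k\to\infty$ and $p\to\infty$ then rests on the lower semicontinuity of $\tau$ established in Proposition~\ref{def.tau}.
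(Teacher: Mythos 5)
Your overall strategy coincides with the paper's: both proofs reduce the trace property to (approximate) unitary invariance, using the $\eps$-invariance of $\t_0$ under $\d$-unitaries of $\Balg$ together with the density of $\Balg$ in $\cb_\infty$, and your four-unitaries decomposition of $B$ with the Cauchy--Schwarz control of the off-diagonal terms is essentially an explicit version of the paper's remark that it suffices to show $\cj^+$ is a unitarily invariant hereditary cone. The difference, and the problem, lies in how the invariance is transported from $\t_0$ on $\Balg$ to $\t$ on $\cb_\infty$.

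Your transfer step has a genuine gap. The identity $\tr(P_mXY)=\tr(P_mYX)$, and hence the cyclicity of $\t_0$ you invoke, is available only when \emph{all} factors lie in some $\cb_n$ with $n$ finite; so the chain $\f_p(U_kTU_k^*)=\t_0\big((P^{-p,\infty}U_k)T(P^{-p,\infty}U_k)^*\big)\leq(1+\d_k)\t(T)$ is justified only for $T\in\Balg^+$, whereas the theorem requires an arbitrary $A\in\cj^+\subset\cb_\infty^+$. You cannot repair this by approximating $T$ in norm by elements of $\Balg^+$: the inequality you need has $\t(T)$ on the \emph{large} side, and lower semicontinuity gives no upper control of $\t$ along norm-convergent sequences ($\t(T_j)$ may far exceed $\t(T)$ even when $T_j\to T$). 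Moreover, lower semicontinuity only ever yields the one-sided bound $\t(UTU^*)\leq\liminf_k\t(U_kTU_k^*)\leq\t(T)$; the reverse inequality must be obtained by applying the same bound to $U^*$ and to the element $UTU^*$, which is not in $\Balg$ even when $T$ is, so the argument does not close even on $\Balg^+$. The paper avoids both difficulties by perturbing the unitary rather than the positive element: each $\f_p$ is a \emph{bounded} positive functional, so $|\f_p(U_0AU_0^*)-\f_p(UAU^*)|\leq3\|\f_p\|\,\|A\|\,\|U-U_0\|$ holds for every $A\in\cb_\infty^+$, and combining this with $\f_p\leq\t$ and the $\d$-unitary estimate for $U_0\in\Balg$ in an $\eps$-chase (a contradiction argument for part $(i)$, a direct two-sided estimate for part $(ii)$) yields the invariance without ever approximating $A$. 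If you wish to keep your architecture, the cyclicity-plus-semicontinuity step should be replaced by this $\f_p$-based estimate.
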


\begin{proof}
$(i)$ Let us prove that $\cj^+$ is a unitarily-invariant face in $\cb_\infty^+$, and suffices it to prove that $A\in\cj^+$ implies that $UAU^*\in\cj^+$, for any $U\in\cu(\cb_\infty)$, the set of unitaries in $\cb_\infty$. To~reach a~contradiction, assume that there exists $U\in\cu(\cb_\infty)$ such that $\t(UAU^*)=\infty$. Then there is $p\in\bn$ such that $\f_p(UAU^*) > 2\t(A)+2$. Let $\d<3$ be such that $V\in\cu_\d$ implies $\t(VAV^*)\leq 2\t(A)$, and let $U_0\in\Balg$ be such that $\|U-U_0\|< \min\big\{ \frac{\d}{3}, \frac1{3\|A\|\|\f_p\|} \big\}$. The inequalities
\begin{gather*}
\|U_0U_0^*-1\| = \|U^*U_0U_0^*-U^*\| \leq \|U^*U_0-1\|\|U_0^*\|+\|U_0^*-U^*\| < \d
\end{gather*}
and $\|U_0^*U_0-1\|<\d$, prove that $U_0\in\cu_\d$. Since
\[ |\f_p(U_0AU_0^*)-\f_p(UAU^*)|\leq 3\| \f_p \| \|A\| \|U-U_0\| <1,\] we get
\begin{gather*}
2\t(A)\geq \t(U_0AU_0^*) \geq \f_p(U_0AU_0^*) \geq \f_p(UAU^*) - 1 \geq 2\t(A)+1
\end{gather*}
which is absurd.
	
$(ii)$ We only need to prove that $\t$ is unitarily-invariant. Let $A\in\cj^+$, $U\in\cu(\cb_\infty)$. For any $\eps>0$, there is $p\in\bn$ such that $\f_p(UAU^*)>\t(UAU^*)-\eps$, since, by $(1)$, $\t(UAU^*)$ is finite. Then, arguing as in the proof of $(1)$, we can find $U_0\in\Balg$, so close to $U$ that
\begin{gather*}
	 |\f_p(U_0AU_0^*)-\f_p(UAU^*)|<\eps, \\
	 (1-\eps)\t(A)\leq \t(U_0AU_0^*) \leq (1+\eps)\t(A).
\end{gather*}
Then
\begin{align*}
\t(A) & \geq \frac1{1+\eps}\ \t(U_0AU_0^*) \geq \frac1{1+\eps}\ \f_p(U_0AU_0^*)
\geq \frac1{1+\eps}\ (\f_p(UAU^*) -\eps)
\\
& \geq \frac1{1+\eps}\ (\t(UAU^*) -2\eps).
\end{align*}
By the arbitrariness of $\eps>0$, we get $\t(A)\geq \t(UAU^*)$.
Exchanging $A$ with $UAU^*$, we get the thesis.
\end{proof}

\begin{prop}
The lower semicontinuous tracial weight $\t$ defined in Proposition~$\ref{def.tau}$ is semifinite and faithful.
\end{prop}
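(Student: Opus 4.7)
The plan is to split the statement into semifiniteness and faithfulness, exploiting in both cases the projections $P^{-p,\infty}\in\cb_0$, for which $\tau(P^{-p,\infty})=\tau_0(P^{-p,\infty})=3^{p+2}$ by~\eqref{P-pinfty} together with Proposition~\ref{def.tau}$(ii)$.

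For semifiniteness, given $A\in\cb_\infty^+$ I would consider $A_p:=A^{1/2}P^{-p,\infty}A^{1/2}\in\cb_\infty^+$. Since $P^{-p,\infty}\leq I$ one has $A_p\leq A$, while $P^{-p,\infty}\nearrow I$ strongly gives $A_p\nearrow A$ in the strong operator topology. From $A_p\leq\|A\|P^{-p,\infty}$ each $A_p$ has finite trace bounded by $\|A\|\cdot 3^{p+2}$; the tracial property already proved then yields $\tau(A_p)=\tau(P^{-p,\infty}AP^{-p,\infty})=\phi_p(A)\nearrow\tau(A)$ as $p\to\infty$, which is semifiniteness.

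For faithfulness, suppose $\tau(A)=0$ for some $A\in\cb_\infty^+$. Then $\phi_p(A)=\tau_0(P^{-p,\infty}AP^{-p,\infty})=0$ for every $p$; since $P^{-p,\infty}\nearrow I$ strongly it suffices to prove that $B_p:=P^{-p,\infty}AP^{-p,\infty}$ vanishes for each $p$. I would pick a norm-approximation $C_k\in\Balg$ of $A^{1/2}$ and set $D_k:=P^{-p,\infty}C_k^*C_k P^{-p,\infty}\in\cb_{n_k}^+\cap\cai_0^+$, so that $D_k\to B_p$ in norm. If $B_p\neq 0$, pick $e\in E^{-p,\infty}$ with $\alpha:=\langle e,Ae\rangle>0$, so that $\langle e,D_k e\rangle>\alpha/2$ eventually. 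By the first proposition of Section~\ref{sec4} the $\cg_{n_k}$-equivariance of $D_k$ replicates this diagonal value across each of the $3^{m-n_k}$ cells of size $2^{n_k}$ inside $K_m$; combining with~\eqref{uguaglianza} yields
\begin{gather*}
\phi_p(D_k)=3^{-n_k}\tr\bigl(P_{n_k}^{-p,n_k}D_k\bigr)+\sum_{j>n_k}3^{-j}\tr\bigl(P_j^j D_k\bigr)\geq\frac{\langle e,D_k e\rangle}{3^{n_k}},
\end{gather*}
and norm-continuity of the bounded positive functional $\phi_p$ would then transfer a positive bound to $\phi_p(A)$, contradicting $\tau(A)=0$.

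The delicate step is controlling the approximation levels $n_k$, which can tend to infinity when $A\notin\bigcup_n\cb_n$ and would degenerate the single-orbit estimate $\alpha/3^{n_k}$. I would address this by promoting the single-orbit lower bound to a total-mass one: the first summand $3^{-n_k}\tr(P_{n_k}^{-p,n_k}D_k)$ of the displayed identity equals $3^{-n_k}$ times the sum of all diagonal entries of $D_k$ on the $\sim 3^{p+2}\cdot 3^{n_k}$ edges of $K_{n_k}$ of length at least $2^{-p}$ (counted by~\eqref{Pjn}), so that its limit is governed directly by the nonzero element $B_p$ rather than by a single orbit. This, combined with the faithfulness of $\tau$ on $\Balg^+$ — which is immediate from applying the single-orbit argument at the fixed level of any $T\in\cb_n^+$ — and the lower semicontinuity of $\tau$ from Proposition~\ref{def.tau}$(i)$, should yield $B_p=0$ and hence $A=0$.
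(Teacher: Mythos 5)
Your semifiniteness argument coincides with the paper's: there too one sets $S_p:=T^{1/2}P^{-p,\infty}T^{1/2}$, notes $0\le S_p\le T$ and $S_p\in\cj^+$ (via $P^{-p,\infty}\in\cj^+$ and the ideal property of $\cj$ from the preceding theorem), and uses traciality together with $P^{-q,\infty}P^{-p,\infty}=P^{-p,\infty}$ for $q\ge p$ to get $\t(S_p)=\t_0(P^{-p,\infty}TP^{-p,\infty})=\f_p(T)\nearrow\t(T)$. That half is correct and needs no further comment.

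The faithfulness half has a genuine gap, and it is exactly the one you flag yourself without closing it. Your lower bound for $\f_p(D_k)$ comes from replicating the diagonal entry $\langle e,D_ke\rangle$ along its $\cg_{n_k}$-orbit, which has normalized density $3^{-n_k}$ among the edges of $K_m$; the proposed repair via the ``total mass'' $3^{-n_k}\tr\bigl(P^{-p,n_k}_{n_k}D_k\bigr)$ does not escape this, because positivity gives no lower bound on the diagonal entries of $D_k$ off that single orbit, so the total mass is again bounded below only by $3^{-n_k}(\alpha/2)$. Quantitatively, if $\|A-C_k^*C_k\|<\eps_k$ with $C_k\in\cb_{n_k}$, the best you extract is $\f_p(A)\ge 3^{-n_k}(\alpha-\eps_k)-3^{p+2}\eps_k$, and since the level $n_k$ required to achieve accuracy $\eps_k$ is not under control for $A\notin\Balg$ (it may grow arbitrarily fast as $\eps_k\to0$), this expression need not be positive for any $k$. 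The argument as written only proves faithfulness on $\Balg^+$, which does not pass to the norm closure. The paper's proof uses a different mechanism that avoids orbit counting altogether: from $\f_p(T)=0$ one has $0=\lim_n\tr(P_nP^{-p,\infty}TP^{-p,\infty}P_n)/\mu_d(K_n)$, and this sequence is eventually increasing (by the identity $\tr(P_{m+1}S)=3\tr(P_mS)+\tr\bigl(P^{m+1}_{m+1}S\bigr)$ from the Lemma opening Section~\ref{sec5}); a nonnegative, eventually increasing sequence with limit $0$ is eventually $0$, so $\tr(P_nP^{-p,\infty}TP^{-p,\infty}P_n)=0$ for large $n$, and faithfulness of the ordinary trace $\tr$ then gives $TP^{-p,\infty}P_n=0$, hence $TP^{-p,\infty}=0$ and, $p$ being arbitrary, $T=0$. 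That monotonicity step is the ingredient your approach is missing, and without it (or some substitute giving vanishing of each term rather than of the renormalized limit) the faithfulness claim does not follow from your estimates.
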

\begin{proof}
Let us recall that, for any $p\in\bn$, $P^{-p,\infty} \in\cai_0^+$ by Proposition~\ref{def.Qfi}. From Proposition~\ref{def.tau} follows that $\t(P^{-p,\infty})=\t_0(P^{-p,\infty})<\infty$, hence $P^{-p,\infty}\in\cj^+$. Then, for any $T\in\cb_\infty^+$, $S_p := T^{1/2}P^{-p,\infty}T^{1/2} \in\cj^+$, and $0 \leq S_p \leq T$. Moreover,
\begin{align*}
\t(S_p) &= \t\big(T^{1/2}P^{-p,\infty}T^{1/2}\big) = \t(P^{-p,\infty}TP^{-p,\infty}) = \sup_{q\in\bn} \t_0(Q_qP^{-p,\infty}TP^{-p,\infty}Q_q)\\
& = \t_0(P^{-p,\infty}TP^{-p,\infty}) = \f_p(T),
\end{align*}
so that $\sup\limits_{p\in\bn} \t(S_p) = \t(T)$, and $\t$ is semifinite. Finally, if $T\in\cb_\infty^+$ is such that $\t(T)=0$, then $\sup\limits_{p\in\bn} \f_p(T)=0$. Since $\{\f_p(T) \}_{p\in\bn}$ is an increasing sequence, $\f_p(T)=0$, $\forall\, p\in\bn$. Then, for a~fixed $p\in\bn$, we get $0 = \t_0(P^{-p,\infty}TP^{-p,\infty}) = \lim\limits_{n\to\infty} \frac{ \tr(P_nP^{-p,\infty}TP^{-p,\infty}P_n) }{\mu_d(K_n)}$. Since the sequence $\big\{ \frac{ \tr(P_nP^{-p,\infty}TP^{-p,\infty}P_n) }{\mu_d(K_n)} \big\}_{n\in\bn}$ is definitely increasing, we get $\tr(P_nP^{-p,\infty}TP^{-p,\infty}P_n) = 0$ definitely, that is $TP^{-p,\infty}P_n = 0$ definitely, so that $TP^{-p,\infty}=0$. By the arbitrariness of $p\in\bn$, we~get~\mbox{$T=0$}.
\end{proof}

\section[A semifinite spectral triple on the inductive limit A infty]
{A semifinite spectral triple on the inductive limit $\boldsymbol{\ca_\infty}$}\label{sec6}

Since the covering we are studying is ramified, the family $\{\ca_n,\ch_n,D_n\}$ does not have a simple tensor product structure, contrary to what happened in~\cite{AGI01}. We therefore use a different approach to construct a semifinite spectral triple on $\ca_\infty$: our construction is indeed based on the pair $(\cb_\infty,\t)$ of the $C^*$-algebra of geometric operators and the semicontinuous semifinite weight on~it.

The Dirac operator will be defined below (Definition~\ref{Dinfty}) through its phase and the functional calculi of its modulus with continuous functions vanishing at $\infty$.
More precisely we shall use the following

\begin{dfn}
Let $(\mathfrak{C},\tau)$ be a $C^*$-algebra with unit endowed with a semicontinuous semifinite faithful trace. A selfadjoint operator $T$ affiliated to $(\mathfrak{C},\tau)$ is defined as a pair given by a closed subset $\s(T)$ in $\br$ and a $*$ homomorphism $\phi\colon\cc_0(\s(T))\to \mathfrak{C}$, $f(T)\mathop{=}\limits^{\mathrm{def}}\phi(f)$, provided that the support of such homomorphism is the identity in the GNS representation $\pi_\t$ induced by the trace $\t$.
\end{dfn}
The previous definition was inspired by that in~\cite{DFR} appendix A, and should not be confused with that of Woronowicz for $C^*$-algebras without identity.
\begin{rem}\label{varieOps}
The $*$-homomorphism $\phi_\t=\pi_\t\circ\phi$ extends to bounded Borel functions on $\br$ and $e_{(-\infty,t]}\mathop{=}\limits^{\mathrm{def}}\phi_\t(\chi_{(-\infty,t]})$ tends strongly to the identity when $t\to\infty$, hence it is a spectral family. We shall denote by $\pi_\t(T)$ the selfadjoint operator affiliated to $\pi_\t(\mathfrak{C})''$ given by
\begin{gather*}
\pi_\t(T)\mathop{=}\limits^{\mathrm{def}}\int_\br t\, {\rm d} e_{(-\infty,t]}.
\end{gather*}
\end{rem}

\begin{prop}\label{aff-prop}
Let $T$ be a selfadjoint operator affiliated to $(\mathfrak{C},\t)$ as above.
\begin{itemize}\itemsep=0pt
\item[$(a)$] Assume that for any $n\in\bn$, there is $\f_n\in\cc(\br)\colon 0\leq\f_n\leq 1,\f_n=1$ for $|t|\leq a_n$, $\f_n(t)=0$ for $|t|\geq b_n$ with $0<a_n<b_n$ and $\{a_n\}$, $\{b_n\}$ increasing to $\infty$. Then, for any $A\in\mathfrak{C}$, if~$\sup\limits_n\|[T\cdot \f_n(T),A]\|=C<\infty$ then $[\pi_t(T),\pi_\t(A)]$ is bounded and $\|[\pi_t(T),\pi_\t(A)]\|=C$.
\item[$(b)$] If $\t(f(T))<\infty$ for any positive function $f$ with compact support on the spectrum of $T$ then $\pi_\t(T)$ has $\t$-compact resolvent.
\end{itemize}
\end{prop}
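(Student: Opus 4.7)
The plan is to verify the two parts separately, using that the GNS representation $\pi_\t$ is faithful on $\mathfrak{C}$ (since $\t$ is a faithful semifinite trace) and that, as observed in Remark~\ref{varieOps}, the composition $\phi_\t=\pi_\t\circ\phi$ extends to the Borel functional calculus of the selfadjoint operator $\pi_\t(T)$ on $\ch_\t$.

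For $(a)$, the key observation is that $T\cdot\f_n(T)=\phi(t\mapsto t\f_n(t))$ lies in $\mathfrak{C}$ because $t\f_n(t)$ is continuous with compact support, so $\pi_\t(T\cdot\f_n(T))=\pi_\t(T)\f_n(\pi_\t(T))$ is a bounded selfadjoint operator and, by faithfulness of $\pi_\t$, $\|[T\f_n(T),A]\|=\|[\pi_\t(T)\f_n(\pi_\t(T)),\pi_\t(A)]\|$ for every $n$. Since $0\le\f_n\le 1$ and $\f_n\to 1$ pointwise on $\s(T)$, the bounded functional calculus yields $\f_n(\pi_\t(T))\to I$ strongly on $\ch_\t$; for every $x\in\dom(\pi_\t(T))$ one also has $\pi_\t(T)\f_n(\pi_\t(T))x=\f_n(\pi_\t(T))\pi_\t(T)x\to\pi_\t(T)x$. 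First I would prove that $\pi_\t(A)$ preserves $\dom(\pi_\t(T))$: for $x\in\dom(\pi_\t(T))$, the sequence $\pi_\t(T)\f_n(\pi_\t(T))\pi_\t(A)x=[\pi_\t(T)\f_n(\pi_\t(T)),\pi_\t(A)]x+\pi_\t(A)\pi_\t(T)\f_n(\pi_\t(T))x$ is norm-bounded (by $C\|x\|+\|\pi_\t(A)\|\|\pi_\t(T)x\|+o(1)$), whereas $\f_n(\pi_\t(T))\pi_\t(A)x\to\pi_\t(A)x$ strongly; extracting a weakly convergent subsequence and pairing with elements of $\dom(\pi_\t(T))$ via selfadjointness of $\pi_\t(T)$ identifies $\pi_\t(A)x$ as an element of $\dom(\pi_\t(T))$ and determines $\pi_\t(T)\pi_\t(A)x$. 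Once this invariance is in place, both summands of the commutator converge strongly on $\dom(\pi_\t(T))$, so $[\pi_\t(T)\f_n(\pi_\t(T)),\pi_\t(A)]x\to[\pi_\t(T),\pi_\t(A)]x$, which yields $\|[\pi_\t(T),\pi_\t(A)]x\|\le C\|x\|$ and, by density of $\dom(\pi_\t(T))$, $\|[\pi_\t(T),\pi_\t(A)]\|\le C$. For the reverse inequality, given $\eps>0$ pick $x\in\dom(\pi_\t(T))$ with $\|x\|=1$ and $\|[\pi_\t(T),\pi_\t(A)]x\|>\|[\pi_\t(T),\pi_\t(A)]\|-\eps$; by the strong convergence of the commutators just established, $\|[\pi_\t(T)\f_n(\pi_\t(T)),\pi_\t(A)]x\|>\|[\pi_\t(T),\pi_\t(A)]\|-2\eps$ for $n$ large, so $C\ge\|[\pi_\t(T),\pi_\t(A)]\|-2\eps$, and letting $\eps\to 0$ forces equality.

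For $(b)$, fix $t\in(0,1)$ (for $t\ge 1$ the relevant spectral projection of $(1+\pi_\t(T)^2)^{-1/2}$ vanishes); by Borel functional calculus, $e_{(t,\infty)}\big((1+\pi_\t(T)^2)^{-1/2}\big)=\chi_{[-r,r]}(\pi_\t(T))$ with $r=\sqrt{1/t^2-1}$. Pick a continuous $f\colon\br\to[0,1]$ with compact support and $f\ge\chi_{[-r,r]}$; monotonicity of the functional calculus gives $\chi_{[-r,r]}(\pi_\t(T))\le f(\pi_\t(T))=\pi_\t(f(T))$, and applying the normal extension of $\t$ to $\pi_\t(\mathfrak{C})''$ yields $\t\big(\chi_{[-r,r]}(\pi_\t(T))\big)\le\t(f(T))<\infty$ by hypothesis. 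Hence every spectral projection of $(1+\pi_\t(T)^2)^{-1/2}$ associated with an interval bounded away from $0$ has finite $\t$-trace, which is exactly the definition of $\t$-compactness recalled in Section~\ref{SemST}.

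The main obstacle is the careful handling in $(a)$ of the invariance of $\dom(\pi_\t(T))$ under $\pi_\t(A)$ and the passage from strong convergence of the approximating commutators to the desired norm equality; part $(b)$ then reduces to a direct spectral monotonicity estimate once the Borel extension of the affiliation $\phi$ is invoked.
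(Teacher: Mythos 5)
Your part~$(b)$ is correct and is essentially the paper's own argument: in both cases one notes that the spectral projection $e_{(t,\infty)}$ of the resolvent (you use $\big(1+\pi_\t(T)^2\big)^{-1/2}$, the paper uses $(|T|-\l I)^{-1}$) is dominated by $\pi_\t(f(T))$ for some continuous compactly supported $f\geq0$, and then invokes the hypothesis together with the normal extension of $\t$. The first half of your part~$(a)$, namely that $\pi_\t(A)$ preserves $\dom(\pi_\t(T))$ and that $\|[\pi_\t(T),\pi_\t(A)]\|\leq C$, is also sound, though by a different route: the paper works with the sesquilinear form $F(y,x)=(\pi_\t(T)y,\pi_\t(A)x)-(y,\pi_\t(A)\pi_\t(T)x)$ on the core $\cd_0$ of vectors of bounded spectral support, where $F(y,x)=(y,\pi_\t([T\f_n(T),A])x)$ for $n$ large enough, and extracts both the domain invariance and the bound from the estimate $|F(y,x)|\leq C\|x\|\,\|y\|$; your strong-convergence plus weak-compactness argument is a legitimate alternative for that direction.

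The gap is in your ``reverse inequality''. After the first half, what remains to be proved is $C\leq\|[\pi_\t(T),\pi_\t(A)]\|$, i.e., that each $\|[T\f_n(T),A]\|$ is dominated by the norm of the limiting commutator. Your paragraph instead picks a unit vector $x$ nearly attaining $\|[\pi_\t(T),\pi_\t(A)]\|$ and concludes $C\geq\|[\pi_\t(T),\pi_\t(A)]\|-2\eps$: that is the inequality you already established, restated, and letting $\eps\to0$ does not ``force equality'' --- it yields $C\geq\|[\pi_\t(T),\pi_\t(A)]\|$ a second time. The missing direction is not automatic: for a Lipschitz compression $g_n(t)=t\f_n(t)$ of the identity, $\|[g_n(T),A]\|$ is not in general controlled by $\|[T,A]\|$, so some use of the specific structure is required. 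The paper's route is to identify $C$ as the \emph{optimal} bound of the form $F$ above, and to note that the optimal bound of $F$ on $\cd\times\cd$ is precisely the norm of the (closure of the) commutator $[\pi_\t(T),\pi_\t(A)]$. You need to supply an argument of this kind; as written you have only proved boundedness with $\|[\pi_\t(T),\pi_\t(A)]\|\leq C$, which is not enough for the way the equality is used in Theorem~\ref{SFtriple} (the identification of the Connes distance with the geodesic distance via Proposition~\ref{commutator}).
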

\begin{proof} $(a)$ Let $\cd$ be the domain of $\pi_\t(T)$, $\cd_0$ the space of vectors in $\cd$ with bounded support w.r.t.~to $\pi_\t(T)$, and consider the sesquilinear form
$F(y,x)=(\pi_\t(T)y,\pi_\t(A)x)-(y,\pi_\t(A)\pi_\t(T)x)$ defined on $\cd$. By hypothesis, for any $x,y\in\cd_0$ there exists $n$ such that $\pi_\t(\f_n(T))x=x$ and $\pi_\t((\f_n(T))y=y$, hence $F(y,x)=(y,\pi_\t([T\cdot \f_n(T),A])x)\leq C\|x\|\ \|y\|$.
By the density of $\cd_0$ in $\cd$ w.r.t.~the graph norm of $\pi_\t(T)$, the same bound holds on $\cd$.
Then for $y,x\in\cd$, $|(\pi_\t(T)y,\pi_\t(A)x)|\leq |(y,\pi_\t(A)\pi_\t(T)x)|+|F(y,x)|\leq (\|\pi_\t(A)\pi_\t(T)x\|+C\|x\|)\|y\|$ which implies $\pi_\t(A)x$ belongs to the domain of $\pi_\t(T)^*=\pi_\t(T)$. Therefore $\pi_\t(T)\pi_\t(A)-\pi_\t(A)\pi_\t(T)$ is defined on $\cd$ and its norm is bounded by $C$. Since $C$ is the optimal bound for the sesquilinear form $F$ it is indeed the norm of the commutator.

$(b)$ Let $\l$ be in the resolvent of~$|T|$. We then note that for any $f$ positive and zero on a~neigh\-bourhood of the origin there is a $g$ positive and with compact support such that $f\big((|T|-\l I)^{-1}\big)=g(|T|)$.
Therefore $\t\big(f\big((|T|-\l I)^{-1}\big)\big)<\infty$, hence
$\t\big(e_{(t,+\infty)}\big(\pi_\t\big((|T|-\l I)^{-1}\big)\big)\big)<\infty$ for any $t>0$, i.e., $\pi_\t\big((|T|-\l I)^{-1}\big)$ is $\tau$-compact (cf.~Section~\ref{SemST}).
\end{proof}

\begin{dfn}\label{Dinfty}
We consider the Dirac operator $D=F|D|$ on $\ell^2(E_\infty)$, where $F$ is the orientation reversing operator on edges and
\begin{gather*}
|D|=\sum_{n\in\bz}2^{-n}P^n,\qquad \s(|D|)=\{2^{-n},\ n\in\bz\}\cup\{0\}.
\end{gather*}
\end{dfn}

\begin{prop}\label{OnDinfty}
The following hold:
\begin{itemize}\itemsep=0pt
\item[$(a)$] The
elements $D$ and $|D|$ are affiliated to $(\cb_\infty,\tau)$.
\item[$(b)$] The following formulas hold: $\t(P^n)=6\cdot 3^{-n}$, $\t(P^{-p,\infty})=3^{p+2}$, as a consequence the operator $D$ has $\t$-compact resolvents
\item[$(c)$] The trace $\t(I+ D^{2})^{-s/2}<\infty$ if and only if $s>d=\frac{\log3}{\log2}$ and
\begin{gather*}
\Res_{s=d}\t\big(I+ D^{2}\big)^{-s/2}=\frac6{\log2}.
\end{gather*}
\end{itemize}
\end{prop}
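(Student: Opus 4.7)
I would prove the three parts in order, relying throughout on the fact that every local isometry $V_\gamma$, $\gamma\in\cg$, preserves both the length and the orientation of edges, so that $[V_\gamma,P^n]=0$ and $[V_\gamma,F]=0$. Hence $P^n,F\in\bigcap_k\cb_k\subset\cb_\infty$, and since each $\cb_k$ is a von~Neumann algebra the strong sums $\sum_{n\leq N}P^n$ and $I-\sum_{n\leq N}P^n$ also lie in $\cb_k$.

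For part~$(a)$, I would define $\phi(f):=\sum_{n\in\bz}f(2^{-n})P^n$ (strong sum) for $f\in\cc_0(\sigma(|D|))$ and show $\phi(f)\in\cb_\infty$ by decomposing
\begin{gather*}
\phi(f)=\sum_{|n|\leq N}f(2^{-n})P^n+f(0)\Big(I-\sum_{n\leq N}P^n\Big)+\sum_{n>N}(f(2^{-n})-f(0))P^n+\sum_{n<-N}f(2^{-n})P^n,
\end{gather*}
where the first two summands lie in $\cb_\infty$, while continuity of $f$ at $0$ and vanishing at infinity force the last two to have norm below $\eps$ once $N$ is large; norm-closedness of $\cb_\infty$ concludes. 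That $\phi$ is a $*$-homomorphism with support equal to the identity in the GNS representation follows from $\sum_n P^n=I$ strongly. The case of $D$ then reduces to that of $|D|$ via $f(D)=\tfrac{I+F}{2}f_+(|D|)+\tfrac{I-F}{2}f_-(|D|)$ with $f_\pm(x):=f(\pm x)$, using $F\in\cb_\infty$.

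For part~$(b)$, equation~\eqref{Pjn} gives $\tr(P^j_n)=6\cdot 3^{n-j}$ for $j\leq n$, so $\tau_0(P^j)=\lim_n 6\cdot 3^{n-j}/3^n=6\cdot 3^{-j}$, which Proposition~\ref{def.tau}$(ii)$ promotes to $\tau(P^j)=6\cdot 3^{-j}$. The formula $\tau(P^{-p,\infty})=3^{p+2}$ is then equation~\eqref{P-pinfty} combined with the same proposition. For the $\tau$-compact resolvent property I would invoke Proposition~\ref{aff-prop}$(b)$: any positive $f$ with compact support on $\sigma(|D|)$ vanishes on $\{2^{-n}:n<N\}$ for some $N$, whence
\begin{gather*}
\tau(f(|D|))=\sum_{n\geq N}f(2^{-n})\cdot 6\cdot 3^{-n}\leq 6\|f\|_\infty\sum_{n\geq N}3^{-n}<\infty.
\end{gather*}

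For part~$(c)$, spectral calculus with the $P^n$-decomposition of $D^2=|D|^2$ gives
\begin{gather*}
\tau\big((I+D^2)^{-s/2}\big)=\sum_{n\in\bz}6\cdot 3^{-n}(1+2^{-2n})^{-s/2}.
\end{gather*}
The $n\geq 0$ tail is dominated by $6\sum_{n\geq 0}3^{-n}$ and so is holomorphic in $s$. Substituting $m=-n\geq 1$, the $n<0$ piece becomes $\sum_{m\geq 1}6(3\cdot 2^{-s})^m(1+2^{-2m})^{-s/2}$. Writing $(1+2^{-2m})^{-s/2}=1+r_m(s)$ with $|r_m(s)|\leq C\cdot 2^{-2m}$ uniformly for $s$ near $d$, the dominant geometric series $\sum_{m\geq 1}6(3\cdot 2^{-s})^m=\tfrac{6\cdot 3\cdot 2^{-s}}{1-3\cdot 2^{-s}}$ converges precisely for $s>d=\log_2 3$ and has a simple pole at $s=d$ (using $1-3\cdot 2^{-s}=(\log 2)(s-d)+O((s-d)^2)$) with residue $\tfrac{6\cdot 3\cdot 2^{-d}}{\log 2}=\tfrac{6}{\log 2}$; the remainder $\sum_m 6(3\cdot 2^{-s})^m r_m(s)$ has ratio bounded by $3\cdot 2^{-s}/4<1$ near $d$ and so is holomorphic there, contributing no pole.

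The main obstacle is showing $\phi(f)\in\cb_\infty$ when $f(0)\neq 0$, since then the natural sum $\sum_n f(2^{-n})P^n$ fails to converge in operator norm. The trick, as above, is to peel off the ``constant at $0$'' piece $f(0)(I-\sum_{n\leq N}P^n)$, which already lies in $\cb_\infty$ because $I$ does, and then use continuity of $f$ at $0$ to bound the residual sum uniformly in norm.
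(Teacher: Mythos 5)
Your computations in parts $(b)$ and $(c)$ are essentially the paper's: the values $\tau(P^n)=6\cdot3^{-n}$ and $\tau\big(P^{-p,\infty}\big)=3^{p+2}$ are obtained the same way, and your residue computation, though organized as a single Dirichlet series over $n\in\bz$ rather than the paper's splitting into $\tau\big(P^{-\infty,0}\big(I+D^2\big)^{-s/2}\big)+\tau\big(P^{1,+\infty}\big(I+D^2\big)^{-s/2}\big)$, extracts the same simple pole at $s=d$ with the same residue $6/\log 2$. One small point there: the identity $\tau\big(\big(I+D^2\big)^{-s/2}\big)=\sum_n 6\cdot 3^{-n}\big(1+2^{-2n}\big)^{-s/2}$ deserves a word of justification, since $\tau$ is only a lower semicontinuous weight; additivity on positive elements gives the inequality ``$\geq$'' for free, but for ``$\leq$'' you should reduce to $\tau_0$ on $\cai_0^+$ as the paper does, using~\eqref{evConst} for the part supported on edges of length at most $1$ and the definition of $\tau_0$ for the rest.

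The genuine gap is in part $(a)$. Showing $\phi(f)=\sum_n f(2^{-n})P^n\in\cb_\infty$ is the easy half, and can be done in one line: since $F$ and the $P^n$ commute with every $V_\gamma$, they lie in the von Neumann algebra $\cb_0$, which is strongly closed, so $f(D),f(|D|)\in\cb_0$ for every $f\in\cc_0(\br)$; your norm-approximation argument is correct but unnecessary. The hard half, which the paper's definition of an operator affiliated to $(\cb_\infty,\tau)$ explicitly requires, is that the support of $\phi$ be the identity \emph{in the GNS representation} $\pi_\tau$. Your justification --- ``follows from $\sum_nP^n=I$ strongly'' --- only gives strong convergence in the defining representation on $\ell^2(E_\infty)$, and this does not transfer to $\pi_\tau$ automatically: $\tau$ is merely a lower semicontinuous weight on the $C^*$-algebra $\cb_\infty$, so $\pi_\tau$ has no a priori normality. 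Concretely, one must show that $\pi_\tau\big(e_{|D|}(2^p,\infty)\big)=\pi_\tau\big(P^{-\infty,-p-1}\big)\to 0$ strongly as $p\to\infty$, i.e., that $\tau\big(A^*P^{-\infty,-p-1}A\big)\to0$ for $A$ in a dense subset of the GNS space; for a decreasing sequence of projections, lower semicontinuity gives an inequality in the wrong direction. The paper supplies exactly the missing ingredient: $P^{-\infty,0}$ is central in $\cb_0$, the corner $P^{-\infty,0}\cb_0$ is isomorphic to $\cb\big(\ell^2(K)\big)$ with $\tau$ restricting to the ordinary trace, hence $\pi_\tau$ is \emph{normal} on that corner; since the projections $P^{-\infty,-p-1}$ are all dominated by $P^{-\infty,0}$ and tend to $0$ strongly there, the same holds after applying $\pi_\tau$. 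Without an argument of this kind your part $(a)$ is incomplete.
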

\begin{proof}
($a$) We first observe that the $*$-homomorphisms for $D$ and $|D|$ have the same support projection, then note that since $F$ and $P_n$ belong to $\cb_0$ (which is a von Neumann algebra) for any $n\in\bn$, then $f(D)$ and $f(|D|)$ belong to $\cb_0$ for any $f\in\cc_0(\br)$; therefore it is enough to show that the support of $f\mapsto f( |D|)$ is the identity in the representation $\pi_\t$.

In order to prove this, it is enough to show that $\pi_\t(e_{|D|} [0,2^p])$ tends to the identity strongly when $p\to\infty$, that is to say that $\pi_\t(e_{|D|}(2^p,\infty))$ tends to 0 strongly when $p\to\infty$.

We consider then the projection $P^{-\infty,0}$ which projects on the space generated by the edges with $\length(e)\leq 1$. Clearly, such projection belongs to $\cb_0$, we now show that it is indeed central there. In fact, if $c$ is a cell with $\size(c)=1$, $P_c$ commutes with $\cb_0$. Since $P^{-\infty,0}=\sum_{\size(c)=1}P_c$, then $P^{-\infty,0}$ commutes with $\cb_0$.
On the one hand, the von Neumann algebra $P^{-\infty,0}\cb_0$ is isomorphic to $\cb\big(\ell^2(K)\big)$ and the restriction of $\t$ to $P^{-\infty,0}\cb_0$ coincides with the usual trace on $\cb\big(\ell^2(K)\big)$, therefore the representation $\p_\t$ is normal when restricted to $P^{-\infty,0}\cb_0$. On the other hand, since $e_{|D|}(2^p,\infty)=P^{-\infty,-p-1}$ is, for $-p\leq 1$, a sub-projection of $P^{-\infty,0}$, and $P^{-\infty,-p-1}$ tends to 0 strongly in the given representation, the same holds of the representation $\pi_\t$.

($b$) We prove the first equation. Indeed
\begin{gather*}
\t(P^n)
=\lim_m \frac{\tr P_{m}^n}{\vol(K_m)}
=\tr P_{0}^n+\lim_m\sum_{j=1}^m\frac{\tr P_j^j P^n}{\vol(K_j)}.
\end{gather*}
The first summand is non-zero iff $n\leq0$, while the second vanishes exactly for such $n$. Since
\begin{gather*}
\lim_m\sum_{j=1}^m\frac{\tr P_j^j P^n}{\vol(K_j)}
=\frac{\tr P_n^n }{\vol(K_n)},
\end{gather*}
the result in~\eqref{Pjn} shows that in both cases we obtain $6\cdot 3^{-n}$. We already proved in~\eqref{P-pinfty} that $\t_0(P^{-p,\infty})=3^{p+2}$. Since $P^{-p,\infty}\in\cb_0$, the same holds for $\t$ by Proposition~\ref{def.tau}$(ii)$.
Then the thesis follows by condition $(b)$ in Proposition~\ref{aff-prop}.

($c$) We have $\t\big(I+ D^{2}\big)^{-s/2}=\t\big(P^{-\infty,0}\big(I+ D^{2}\big)^{-s/2}\big)
+\t\big(P^{1,+\infty}\big(I+ D^{2}\big)^{-s/2}\big)$.
A straightforward computation and~\eqref{evConst} give
\begin{gather*}
\t\big(P^{-\infty,0}\big(I+ D^{2}\big)^{-s/2}\big)=\tr\big(P_0\big(I+ D^{2}\big)^{-s/2}\big)=
6\sum_{n\geq0}\big(1+2^{2n}\big)^{-s/2}3^{n},
\end{gather*}
which converges iff $s>d$. As for the second summand, we have
\begin{align*}
\t\big(P^{1,+\infty}\big(I+ D^{2}\big)^{-s/2}\big)
&=\t_0\big(P^{1,+\infty}\big(I+ D^{2}\big)^{-s/2}\big)
=\lim_m\frac{\tr\big(P^{1,m}_m\big(I+ D^{2}\big)^{-s/2}\big)}{\mu_d(K_m)}
\\
&=\lim_m\sum_{j=1}^m 3^{-m}\tr\big(P^{1,m}_m\big(I+ D^{2}\big)^{-s/2}\big)
=6\sum_{j=1}^\infty 3^{-j}\big(1+2^{-2j}\big)^{-s/2},
\end{align*}
which converges for any $s$ hence does not contribute to the residue. Finally
\begin{align*}
\Res_{s=d}\t\big(I+ D^{2}\big)^{-s/2}
&=\lim_{s\to d^+}(s-d)\t\big(I+ D^{2}\big)^{-s/2}\\
&=\lim_{s\to d^+}\bigg(s-\frac{\log3}{\log2}\bigg)6\sum_{n\geq0}\big(1+2^{-2n}\big)^{-s/2}
e^{n(\log3-s\log2)}\\
&=\frac6{\log2}\lim_{s\to d^+}\frac{s\log2-\log3}{1-e^{-(s\log2-\log3)}}
=\frac6{\log2}. \tag*{\qed}
\end{align*}
\renewcommand{\qed}{}
\end{proof}

\begin{prop}\label{commutator}
For any $f\in\ca_n$ $\sup\limits_{t>0}\big\|\big[ e_{[-t,t]}(D)\, D,\rho(f)\big]\big\|
=\|[ D_n,\rho(f|_{K_n})]\|$.
\end{prop}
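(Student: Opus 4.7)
The strategy is to identify both sides as suprema of the same quantity $\length(e)^{-1}|f(e^+)-f(e^-)|$ taken over specified subsets of edges, and to show that these suprema agree by combining the $\cg_n$-invariance of $f$ with the self-similar structure of the gasket. Since $D=F|D|$ and $|D|e=\length(e)^{-1}e$ for each basis edge $e$, the spectral projection $e_{[-t,t]}(D)$ is the projection onto $\overline{\ssv}\{e\in E_\infty : \length(e)\geq 1/t\}$. Using $\rho(f)e=f(e^+)e$ and $(Fe)^+=e^-$ one computes
\begin{gather*}
[e_{[-t,t]}(D)\,D,\rho(f)]\,e=\chi_{\{\length(e)\geq 1/t\}}\,\length(e)^{-1}(f(e^+)-f(e^-))\,Fe.
\end{gather*}
Because $F$ permutes the canonical orthonormal basis, this operator sends each basis vector to a scalar multiple of a distinct basis vector, so its norm equals the supremum of the coefficient moduli; taking $\sup_{t>0}$ yields $\sup_{e\in E_\infty}\length(e)^{-1}|f(e^+)-f(e^-)|$. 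The same computation for $D_n$ (acting as $\length(e)^{-1}F$ on edges of length $\leq 2^n$ and zero otherwise, after the identification of Section~\ref{sec4}) gives $\|[D_n,\rho(f|_{K_n})]\|=\sup_{e\in E_n}\length(e)^{-1}|f(e^+)-f(e^-)|$.

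Next, any edge $e\in E_\infty$ with $\length(e)\leq 2^n$ is contained in a unique cell $C$ of size $2^n$, and by the uniqueness proposition of Section~\ref{sec4} there is a unique $\gamma\in\cg_n$ with $\gamma(C)=K_n$. Since $\gamma$ is a length-preserving isometry, $\gamma(e)\in E_n$ has the same length as $e$, and by $\cg_n$-invariance of $f$ one has $f(e^\pm)=f(\gamma(e)^\pm)$. Therefore
\begin{gather*}
\|[D_n,\rho(f|_{K_n})]\|=\sup_{e\in E_n}\length(e)^{-1}|f(e^+)-f(e^-)|=\sup_{\substack{e\in E_\infty\\ \length(e)\leq 2^n}}\length(e)^{-1}|f(e^+)-f(e^-)|.
\end{gather*}

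The inequality $\|[D_n,\rho(f|_{K_n})]\|\leq \sup_{t>0}\|[e_{[-t,t]}(D)D,\rho(f)]\|$ is now immediate. For the reverse direction it suffices to bound the contribution of edges $e\in E_\infty$ with $\length(e)=2^{n+p}$, $p\geq 1$. Such an edge is a side of a cell of size $2^{n+p}$; the self-similar structure of the gasket yields, via iterated midpoints, a decomposition of the segment underlying $e$ into $2^p$ consecutive edges $e_1,\dots,e_{2^p}\in E_\infty$, each of length $2^n$ and contained in a cell of size $2^n$. By the previous paragraph, $|f(e_i^+)-f(e_i^-)|\leq 2^n\|[D_n,\rho(f|_{K_n})]\|$ for every $i$, and the triangle inequality gives
\begin{gather*}
|f(e^+)-f(e^-)|\leq\sum_{i=1}^{2^p}|f(e_i^+)-f(e_i^-)|\leq 2^{n+p}\|[D_n,\rho(f|_{K_n})]\|=\length(e)\,\|[D_n,\rho(f|_{K_n})]\|,
\end{gather*}
so $\length(e)^{-1}|f(e^+)-f(e^-)|\leq \|[D_n,\rho(f|_{K_n})]\|$, which closes the argument. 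The main technical point is the dyadic decomposition of a long edge: one must verify that the iterated midpoints of a side of a cell of size $2^{n+p}$ are genuine vertices of sub-cells of size $2^n$ and that consecutive midpoints bound an edge of $E_\infty$ lying inside a single such sub-cell — a feature specific to the self-similar geometry of the Sierpinski gasket.
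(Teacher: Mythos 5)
Your proof is correct and follows essentially the same route as the paper's: both compute $\sup_{t>0}\|[e_{[-t,t]}(D)D,\rho(f)]\|$ as $\sup_{e\in E_\infty}\length(e)^{-1}|f(e^+)-f(e^-)|$, control long edges by subdividing a side of a cell at its (iterated) midpoints into edges of length $2^n$, and use the $\cg_n$-invariance of $f\in\ca_n$ to identify the supremum over edges of length at most $2^n$ with $\|[D_n,\rho(f|_{K_n})]\|$. The only cosmetic differences are that you perform the $2^p$-fold dyadic decomposition in one step with the triangle inequality where the paper iterates a two-fold bisection, and that you spell out the invariance step via the uniqueness of the element of $\cg_n$ mapping a cell of size $2^n$ onto $K_n$.
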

\begin{proof}
We observe that
$| D|$ is a multiplication operator on $\ell^2(E_\infty)$, therefore it commutes with~$\rho(f)$. Hence,
\begin{gather*}
\big\|\big[ D\,e_{[-2^p,2^p]}(D),\rho(f)\big]\big\|
=\big\||D|\,e_{[0,2^p]}(|D|)\, (\rho(f)-F\rho(f)F)\big\|
\!=\!\!\sup_{\length(e)\geq 2^{-p}}\!\!\!\frac{|f(e^+)-f(e^-)|}{\length(e)}.
\end{gather*}
As a consequence,
\begin{gather*}
\sup_{p\in\bz}\big\|\big[ D\,e_{[-2^p,2^p]}(D),\rho(f)\big]\big\|
=\sup_{e\in E_\infty}\frac{|f(e^+)-f(e^-)|}{\length(e)}.
\end{gather*}
Recall now that any edge $e$ of length $2^{n+1}$ is the union of two adjacent edges $e_1$ and $e_2$ of~length~$2^n$ such that $e_1^+=e_2^-$, therefore
\begin{gather*}
\frac{|f(e^+)-f(e^-)|}{2^{n+1}}
\leq\frac12\bigg(\frac{|f(e_1^+)-f(e_1^-)|}{2^{n}}+\frac{|f(e_2^+)-f(e_2^-)|}{2^{n}}\bigg)
\leq \sup_{\length(e)=2^n}\frac{|f(e^+)-f(e^-)|}{\length(e)}.
\end{gather*}
Iterating, we get
\begin{gather*}
\sup_{e\in E_\infty}\frac{|f(e^+)-f(e^-)|}{\length(e)}
=\sup_{\length(e)\leq 2^{n}}\frac{|f(e^+)-f(e^-)|}{\length(e)}.
\end{gather*}
Since $f\in\ca_n$,
\begin{gather*}
\sup_{\length(e)\leq 2^{n}}\frac{|f(e^+)-f(e^-)|}{\length(e)}
=\sup_{e\in K_n}\frac{|f(e^+)-f(e^-)|}{\length(e)}
=\|[ D_n,\rho(f|_{K_n})]\|.
\tag*{\qed}
\end{gather*}
\renewcommand{\qed}{}
\end{proof}

In the following Theorem we identify $\cb_\infty$ with $\pi_\t(\cb_\infty)$, the trace $\t$ on $\pi_\t(\cb_\infty)$ with its exten\-sion to $\pi_\t(\cb_\infty)''$, and $D_n$ and $D$ as unbounded operators affiliated with $(\cb_\infty,\t)$ with~$\pi_\t(D_n)$ and $\pi_\t(D)$ as unbounded operators affiliated with $(\pi_\t(\cb_\infty)'',\t)$.

\begin{thm}\label{SFtriple}
The triple $(\cl,\pi_\t(B_\infty)'',D)$ on the unital C$^*$-algebra $\ca_\infty$ is an odd semifinite spectral triple, where $\cl=\cup_n\{f\in\ca_n, f\text{\,Lipschitz}\}$.
The spectral triple has metric dimension $d=\frac{\log3}{\log2}$, the functional
\begin{gather}\label{int-trace}
\oint f=\t_\om \big(\rho(f)\big(I+D^2\big)^{-d/2}\big),
\end{gather}
 is a finite trace on $\ca_\infty$ where $\t_\om$ is the logarithmic Dixmier trace associated with $\t$, and
\begin{gather}\label{integral}
\oint f=\frac6{\log3}\frac{\int_{K_n} f\,{\rm d}\vol}{\vol(K_n)},\qquad f\in\ca_n,
\end{gather}
where $\vol$ is the Hausdorff measure of dimension $d$ normalized as above. As a consequence, $\oint f$ is a Bohr--F{\o}lner mean on the solenoid:
\begin{gather*}
\oint f=\frac6{\log3}\,\lim_{n\in\bn}\frac{\int_{K_n} f\,{\rm d}\vol}{\vol(K_n)},\qquad
f\in\ca_\infty.
\end{gather*}
The Connes distance
\begin{gather*}
d(\f,\psi)=\sup\{|\f(f)-\psi(f)|\colon f\in\cl,\, \| [ D,\rho(f) ] \| = 1 \},\qquad
\f,\psi\in\cs(\ca_\infty)
\end{gather*}
between states on $\ca_\infty$ verifies
\begin{gather}\label{distance}
d(\d_x,\d_y)=d_{\rm geo}(x,y),\qquad x,y\in K_\infty,
\end{gather}
where $\dg$ is the geodesic distance on $K_\infty$.
\end{thm}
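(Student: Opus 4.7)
\textit{Spectral triple axioms and dimension.} I would verify the axioms of Definition~\ref{def:SFtriple}. The $\t$-compactness of $(I+D^2)^{-1/2}$ is Proposition~\ref{OnDinfty}($b$). For the commutator condition, any $f\in\cl$ lies in some $\ca_n$ and is Lipschitz, so Proposition~\ref{commutator} gives $\sup_{t>0}\|[e_{[-t,t]}(D)D,\rho(f)]\|=\|[D_n,\rho(f|_{K_n})]\|<\infty$. Applying Proposition~\ref{aff-prop}($a$) to a continuous cutoff sequence for $D$ (majorized by the corresponding indicator cutoffs) then yields boundedness of $[\pi_\t(D),\pi_\t(\rho(f))]$ inside $\pi_\t(\cb_\infty)''$. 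Finite summability together with the identification of the metric dimension as $d=\log 3/\log 2$ is Proposition~\ref{OnDinfty}($c$).

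\textit{Noncommutative integral and Bohr--F{\o}lner mean.} By the residue identity recalled in Section~\ref{SemST}, $d\cdot\t_\om(\rho(f)(I+D^2)^{-d/2})=\Res_{s=d}\t(\rho(f)(I+D^2)^{-s/2})$. Since $\rho(f)$ commutes with $|D|$, I would decompose $(I+D^2)^{-s/2}=\sum_k(1+4^{-k})^{-s/2}P^k$. The terms with $k>0$ are dominated by $\|f\|_\infty\t(P^k)=6\|f\|_\infty 3^{-k}$ and so contribute nothing to the residue at $s=d$. For $k\le 0$ and $f\in\ca_n$ (with $n\ge 0$), the $\cg_n$-invariance of $f$ combined with~\eqref{uguaglianza} gives $\t(\rho(f)P^k)=\vol(K_n)^{-1}\sum_{e\in E^k_n}f(e^+)$. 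Using $|E^k_n|=6\cdot 3^{n-k}$ and the uniform continuity of $f$, a Riemann-sum argument yields $2^{kd}\sum_{e\in E^k_n}f(e^+)\to 6\int_{K_n}f\,d\vol$ as $k\to-\infty$. Feeding this asymptotic into the final residue computation of Proposition~\ref{OnDinfty}($c$), with $6$ replaced by $6\vol(K_n)^{-1}\int_{K_n}f\,d\vol$, produces $\Res_{s=d}\t(\rho(f)(I+D^2)^{-s/2})=(6/\log 2)\vol(K_n)^{-1}\int_{K_n}f\,d\vol$, and division by $d$ gives~\eqref{integral}. Traciality of $\oint$ follows from that of $\t_\om$. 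For the Bohr--F{\o}lner formula, the self-similar decomposition of $K_m$ into $3^{m-n}$ cells isometric to $K_n$, combined with the $\cg_n$-invariance of $f\in\ca_n$, implies $\vol(K_m)^{-1}\int_{K_m}f\,d\vol=\vol(K_n)^{-1}\int_{K_n}f\,d\vol$ for $m\ge n$; the sequence is thus eventually constant and its limit coincides with the right-hand side of~\eqref{integral}. Since these averages are contractions for the sup norm, $\oint$ is bounded on $\ca_\infty$, and $\cup_n\ca_n$ is norm-dense in $\ca_\infty$, a standard $\eps/3$ argument extends the identity to all of $\ca_\infty$.

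\textit{Connes distance and main obstacle.} Fix $x,y\in K_\infty$ and choose $n$ with $x,y\in K_n$. For the upper bound, any $f\in\cl$ lies in some $\ca_m$, and after enlarging $m$ we may assume $m\ge n$; Proposition~\ref{commutator} gives $\|[D_m,\rho(f|_{K_m})]\|=\|[D,\rho(f)]\|\le 1$, so~\cite[Corollary~5.14]{GuIs16} applied to the level-$m$ spectral triple yields $|f(x)-f(y)|\le d_{\rm geo}^{K_m}(x,y)$. For the lower bound, every Lipschitz $g$ on $K_n$ is, via the identification $\ca_n\cong\widetilde\ca_n$ of Section~\ref{sec4}, the restriction to $K_n$ of a $\cg_n$-invariant continuous $\tilde g\in\cc_b(K_\infty)$, and Proposition~\ref{commutator} gives $\|[D,\rho(\tilde g)]\|=\|[D_n,\rho(g)]\|$; taking the supremum over such $g$ of unit Lipschitz norm and using~\cite[Corollary~5.14]{GuIs16} for the level-$n$ triple yields $d(\d_x,\d_y)\ge d_{\rm geo}^{K_n}(x,y)$. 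The two bounds match provided $d_{\rm geo}^{K_m}(x,y)=d_{\rm geo}^{K_n}(x,y)=d_{\rm geo}(x,y)$ for $x,y\in K_n$; establishing this geodesic convexity of $K_n$ inside $K_\infty$ is the main technical obstacle and proceeds by observing that $K_n$ meets $K_\infty\setminus K_n$ only at finitely many ramification vertices, and that any excursion through $K_\infty\setminus K_n$ between two such vertices admits a replacement of no greater length within $K_n$, thanks to the tree-like attachment of the self-similar cells in the gasket.
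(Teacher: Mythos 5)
Your proposal is correct and covers all the assertions, but for the noncommutative integral it follows a genuinely different route from the paper. The paper proves \eqref{integral} by showing that $\big(I+D^2\big)^{-d/2}-|D_n|^{-d}$ is $\t$-trace class (an explicit estimate on the two spectral pieces $\length(e)>2^n$ and $\length(e)\le 2^n$), so that the Dixmier trace of $\rho(f)\big(I+D^2\big)^{-d/2}$ equals that of $\rho(f)|D_n|^{-d}$, which by \eqref{evConst} reduces to the ordinary trace on $\ell^2(E_n)$ divided by $\vol(K_n)$; the known formula \eqref{fractalNCint} from~\cite{GuIs16} for the finite-level triple then gives the constant $6/\log 3$. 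You instead compute the residue directly, decomposing $\big(I+D^2\big)^{-s/2}$ over the projections $P^k$, evaluating $\t(\rho(f)P^k)$ via \eqref{uguaglianza}, and extracting the leading behaviour $3^k\sum_{e\in E^k_n}f(e^+)\to 6\int_{K_n}f\,{\rm d}\vol$ by an equidistribution/Riemann-sum argument; this is self-contained but in effect re-proves the special case of \eqref{fractalNCint} that the paper simply cites, and the uniform-continuity step (six edges per cell of size $2^k$, each cell of measure $3^k$) should be spelled out if written in full. Your treatment of the Connes distance is essentially the paper's (reduce $\|[D,\rho(f)]\|$ to $\|[D_m,\rho(f|_{K_m})]\|$ via Propositions~\ref{aff-prop}$(a)$ and~\ref{commutator}, then invoke \cite[Corollary~5.14]{GuIs16} at each finite level), except that you make explicit the compatibility $d^{K_m}_{\rm geo}(x,y)=d^{K_n}_{\rm geo}(x,y)=\dg(x,y)$ for $x,y\in K_n$, which the paper leaves implicit; your justification is sound, since $K_n$ meets its complement in $K_{n+1}$ at only two junction points and any excursion between them through the other two cells has length at least $2^{n+1}$, exceeding the internal distance $2^n$. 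The remaining items (axioms of Definition~\ref{def:SFtriple} via Propositions~\ref{OnDinfty} and~\ref{commutator}, eventual constancy of the averages for $f\in\ca_n$ and the density argument for the Bohr--F{\o}lner formula) match the paper's argument.
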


\begin{proof}
The properties of a semifinite spectral triple follow by the properties proved above, in particular property $(1)$ of Definition~\ref{def:SFtriple} follows by Propositions~\ref{aff-prop}$(a)$ and~\ref{commutator}, while pro\-perty $(2)$ follows by Proposition~\ref{OnDinfty}$(b)$.
The functional in equality~\eqref{int-trace} is a finite trace by Proposition~\ref{OnDinfty}$(c)$. Equations~\eqref{integral} and~\eqref{distance} only remain to be proved. We observe that $(I+D^2)^{-d/2}-|D_n|^{-d}$ have finite trace. Indeed
\begin{gather*}
\big((I+D^2)^{-d/2}-|D_n|^{-d}\big)e=
\begin{cases}
\big(1+4^{-k}\big)^{-d/2}e, & \length(e)=2^k,\quad k>n,
\\
\big(\big(1+4^{-k}\big)^{-d/2}-2^{dk}\big)e, & \length(e)=2^k,\quad k>n k\leq n,
\end{cases}
\end{gather*}
hence, makig use of a formula in Theorem~\ref{OnDinfty}$(b)$, we get
\begin{align*}
\big|\t\big(\big(I+D^2\big)^{-d/2}-|D_n|^{-d}\big)\big|
&\leq\sum_{k>n}(1+4^{-k})^{-d/2}\t\big(P^k\big)+\sum_{k\leq n}\big|\big(1+4^{-k}\big)^{-d/2}-3^k\big|\t\big(P^k\big)\\
&\leq6\big(1+4^{-(n+1)}\big)^{-d/2}\sum_{k>n}3^{-k}
+6\sum_{k\leq n}\big|\big(1+4^{k}\big)^{-d/2}-1\big|
\end{align*}
and both series are convergent. Since the Dixmier trace vanishes on trace class operators, this implies that
\begin{gather*}
\t_\om (\rho(f)\big(I+D^2\big)^{-d/2})=\t_\om \big(\rho(f)|D_n|^{-d}\big)
=\frac1d \Res_{s=d}\t\big(\rho(f)|D_n|^{-s}\big),
\end{gather*}
therefore, if $f\in\ca_n$,
\begin{gather*}
\oint f
=\frac1d \Res_{s=d}\t\big(\rho(f)|D_n|^{-s}\big)
=\frac1d \Res_{s=d}\frac{\tr(\rho(f_{K_n})|D_n|^{-s})}{\vol(K_n)}
=\frac{\tr_\om \big(\rho(f)|D_n|^{-d}\big)}{\vol(K_n)}.
\end{gather*}
Now, by formula~\eqref{fractalNCint} applied to $K_n$,
$\tr_\omega(\rho(f)|D_n|^{-d}) = \frac{6\cdot \ell(e)^d}{ \log 3} \int_{K_n} f\, {\rm d}H_d$, where $H_d$ is the Hausdorff measure normalized on $K_n$, hence $H_d=(\m_d(K_d))^{-1}\m_d=3^{-n}\m_d$, and $e\in E_0(K_n)$, hence $\ell(e)^d=3^n$. Therefore
$\tr_\omega\big(\rho(f)|D_n|^{-d}\big) = \frac6{\log3} \int_{K_n} f\, {\rm d}\m_d$ and formula~\eqref{integral} follows.
As~for equation~\eqref{distance}, given $x,y\in K_\infty$ let $n$ such that $x,y\in K_n$, $m\geq n$. Then, combining Propositions~\ref{aff-prop}$(a)$ and~\ref{commutator}, we have, for $f\in\ca_m$,
\begin{gather*}
\|[ D, \rho(f)]\|=\|[ D_m,\rho(f|_{K_m})]\|,
\end{gather*}
and, by Theorem 5.2 and Corollary 5.14 in~\cite{GuIs16},
\begin{gather*}
\sup\{|f(x)-f(y)|\colon f\in\ca_m,\|[D_m,\rho(f|_{K_n})]\|=1\}=\dg(x,y),\qquad m\geq n.
\end{gather*}
Therefore
\begin{align*}
d(\d_x,\d_y)
&=\sup\{|f(x)-f(y)|\colon f\in\cl,\, \|[ D,\rho(f)]\|=1\}
\\
&=\lim_m\ \sup\{|f(x)-f(y)|\colon f\in\ca_m,\, \|[ D,\rho(f)]\|=1\}
\\
&=\lim_m\ \sup\{|f(x)-f(y)|\colon f\in\ca_m,\, \|[D_m,\rho(f|_{K_n})]\|=1\}=\dg(x,y).
\tag*{\qed}
\end{align*}
\renewcommand{\qed}{}
\end{proof}
\begin{rem}
The last statement in Theorem~\ref{SFtriple} shows that the triple $(\cl,\cam,D_\infty)$ recovers two incompatible aspects of the space $\ca_\infty$: on the one hand the compact space given by the spectrum of the unital algebra $\ca_\infty$, with the corresponding finite integral, and on the other hand the open fractafold $K_\infty$ with its geodesic distance. In particular, the functional on $\cl$ given by~$L(f)=\|[D,\rho(f)]\|$ is not a Lip-norm in the sense of Rieffel~\cite{Rieffel} because it does not give rise to~the weak$^*$ topology on $\cs(\ca_\infty)$. In fact, such seminorm produces a distance which is unbounded on points, therefore the induced topology cannot be compact.
\end{rem}

\subsection*{Acknowledgements}
We thank the referees of this paper for many interesting observations and suggestions.
V.A.~is supported by the Swiss National Science Foundation.
D.G.~and T.I.~are supported in part by~GNAMPA-INdAM and the ERC Advanced Grant 669240 QUEST ``Quantum Algebraic Structures and Models'', and acknowledge the MIUR Excellence Department Project awarded to the Department of Mathematics, University of Rome Tor Vergata, CUP E83C18000100006.

\pdfbookmark[1]{References}{ref}
\LastPageEnding

\end{document}